\theoremstyle{plain}
\newtheorem{theorem}{Theorem}[section]
\newtheorem*{theorem*}{Theorem}
\newtheorem{proposition}{Proposition}[theorem]
\newtheorem{lemma}[theorem]{Lemma}
\newtheorem{corollary}[theorem]{Corollary}
\theoremstyle{definition}
\newtheorem{case}{Case}
\newtheorem{definition}[theorem]{Definition}
\newtheorem*{definition*}{Definition}
\newtheorem{remark}[theorem]{Remark}
\newtheorem{example}[theorem]{Example}
\newtheorem{conjecture}[theorem]{Conjecture}
\numberwithin{equation}{section}
\setlist[enumerate,1]{label = \upshape(\roman*), ref = (\arabic*)}
\newcommand{\C}{\mathbb{C}}
\newcommand{\D}{\mathbb{D}}
\newcommand{\N}{\mathbb{N}}
\newcommand{\R}{\mathbb{R}}
\newcommand{\T}{\mathbb{T}}
\newcommand{\X}{\mathcal{X}}
\newcommand{\Y}{\mathcal{Y}}
\newcommand{\Hardyp}{\T_p}
\newcommand{\HardypT}{\Hardyp(T)}
\newcommand{\Lip}{\mathcal{L}}
\newcommand{\LipT}{\mathcal{L}(T)}
\newcommand{\Lmuinf}{L_{\hspace{-.2ex}\mu}^{\infty}}
\newcommand{\LmuinfT}{L_{\hspace{-.2ex}\mu}^{\infty}(T)}
\newcommand{\bigchi}{\mbox{\large$\chi$}}
\newcommand{\ch}{\mathrm{ch}}
\newcommand{\metric}{\mathrm{d}}
\title{The differentiation operator on discrete function spaces of a tree}
\author{Robert F. Allen\textsuperscript{1} and Colin M. Jackson\textsuperscript{2}}
\address{\textsuperscript{1}Department of Mathematics and Statistics, University of Wisconsin-La Crosse}
\address{\textsuperscript{2}Department of Mathematics, University of Colorado Boulder}
\email{rallen@uwlax.edu, colin.jackson@colorado.edu}
\keywords{Differentiation, Discrete function spaces, Infinite trees.} 
\subjclass[2020]{primary: 47B33; secondary: 47B38} 
\begin{document}

\begin{abstract} 
In this paper, we study the differentiation operator acting on discrete function spaces; that is spaces of functions defined on an infinite rooted tree.  We discuss, through its connection with composition operators, the boundedness and compactness of this operator.  In addition, we discuss the operator norm and spectrum, and consider when such an operator can be an isometry.  We then apply these results to the operator acting on the discrete Lipschitz space and weighted Banach spaces, as well as the Hardy spaces defined on homogeneous trees.
\end{abstract}

\maketitle

\section{Introduction}\label{Section:Introduction}
Much work has been done in defining function spaces on discrete structures, such as infinite trees.  While this is not new, many different approaches have been taken in the past.  Beginning with the work of Colonna and Easley, the study of the Lipschitz space on an infinite rooted tree began a line of research into which this paper fits nicely.  Among many of these spaces, the derivative plays a defining role.  The Lipschitz space $\Lip$ defined in \cite{ColonnaEasley:2010} consists of functions on a tree with bounded derivative.  Through the study of the multiplication operators on $\Lip$, a family of spaces was defined called the iterated logarithmic Lipschitz spaces $\Lip^{(k)}$, again defined in terms of a weighted derivative being bounded (see \cite{AllenColonnaEasley:2012}).  In addition, the Zygmund space on a tree was defined in \cite{Locke:2016} as the space of functions whose derivative is contained in $\Lip$.  We see that differentiation is key in the study of many of these discrete function spaces.  However, differentiation itself has not been studied on these spaces of infinite trees.

The purpose of this paper is to study differentiation as an operator on discrete functions spaces of an infinite tree.  Unlike differentiation acting on many classical function spaces of $\D = \{z \in \C : |z| < 1\}$, this operator acting on discrete spaces is often bounded.  Since it is prevalent in the definition of many discrete spaces, it is a good idea to understand more properties of this operator.  

Also, we hope this paper will inspire further study of operators on discrete function spaces.  With more knowledge of the derivative, operators comprising products of differentiation, multiplication, and composition operators can be further studied.  These types of operators are currently being studied when acting on or between many of the classical spaces (see for example \cite{ColonnaSharma:2013,FatehiHammond:2020,FatehiHammond:2021,Sharma:2011,SharmaRajSharma:2012}).  In addition, we also hope that new spaces can be identified utilizing the derivative much in the way that the weighted Lipschitz space and the Zygmund space have been.  There are many examples of these types of spaces in the classical setting, such as the $S^p$ spaces, functions whose derivative is contained in $H^p$.

\subsection{Organization of the paper} In Section \ref{Section:Operator}, we study the differentiation operator $D$ acting on an arbitrary discrete function space.  We provide necessary and sufficient conditions that determine the boundedness of $D$.  While establishing a connection between the differentiation and composition operators, we establish estimates on the norm of $D$ and the spectrum.  We determine conditions on the function space for which $D$ is not an isometry as well as completely determine the eigenvalues.  Finally, we provide sufficient conditions on the function space for which $D$ is not compact.  

In Sections \ref{Section:LipschitzSpace},\ref{Section:WeightedBanachSpace}, and 
\ref{Section:HardySpaces}, we apply the results from Section \ref{Section:Operator} to the Lipschitz space, the weighted Banach spaces, and the discrete Hardy spaces, respectively.  When advantageous, we utilize known results of composition operators.  In all three sections we determine the operator norm of $D$, which lends concrete evidence to support Conjecture \ref{Conjecture:OperatorNorm}. We also show $D$ not to be an isometry on any of these spaces.  Finally, we discuss on what spaces $D$ is not compact.  For the differentiation operator acting on the Lipschitz space, we completely determine the spectrum.  

We end the paper with Section \ref{Section:OpenQuestions}, providing open questions posed throughout the manuscript.  We hope these questions inspire readers to further the study of operators on discrete functions spaces.  The study of operators on discrete spaces does not depend on advanced topics, such as measure theory and complex analysis, that are required in the study on classical spaces.

\section{Differentiation on Discrete Function Spaces}\label{Section:Operator}

By a \textit{tree} $T$ we mean a locally finite, connected, and simply-connected graph, which, as a set, we identify with the collection of its vertices.  A tree $T$ is \textit{homogeneous} if every vertex has the same number of neighbors.  For $q$ in $\N$, a $(q+1)$-homogeneous tree is one where every vertex has $q+1$ neighbors.  

Two vertices $v$ and $w$ are called \textit{neighbors} if there is an edge $[v,w]$ connecting them, and we use the notation $v\sim w$. A vertex is called \textit{terminal} if it has a unique neighbor. A \textit{path} is a sequence of vertices $[v_0,v_1,\dots]$ such that $v_k\sim v_{k+1}$ and  $v_{k-1}\ne v_{k+1}$, for all $k$.  Define the \textit{length} of a finite path $[v=v_0,v_1,\dots,w=v_n]$ to be the number  of edges $n$ connecting $v$ to $w$. The \textit{distance} between vertices $v$ and $w$ is the length $\metric(v,w)$ of the unique path connecting $v$ to $w$.

Given a tree $T$ rooted at $o$, the \textit{length} of a vertex $v$ is defined as $|v|=\metric(o,v)$. For a vertex $v\in T$, a vertex $w$ is called a \textit{descendant} of $v$  if $v$ lies in the path from $o$ to $w$. The vertex $v$ is then called an \textit{ancestor} of $w$.  For $v \in T$ with $v \neq o$, we denote by $v^-$ the unique neighbor which is an ancestor of $v$.  The vertex $v$ is called a \textit{child} of $v^-$. 
For $v\in T$, the set $\ch(v)$ consists of all children of $v$, and the set $S_v$ consists of $v$ and all its descendants, called the \textit{sector} determined by $v$. The set $T\setminus\{o\}$ will be denoted by $T^*$. We denote the open ball centered at vertex $v$ of radius $n \in \N$ to be the set $B(v,n) = \{w \in T : \metric(w,v) < n\}$, and the closed ball centered at vertex $v$ of radius $n \in \N$ to be the set $\overline{B(v,n)} = \{w \in T : \metric(w,v) \leq n\}.$ By a \textit{function on a tree} we mean a complex-valued function on the set of its vertices.

In this paper, we shall assume the tree $T$ to be without terminal vertices (and hence infinite), and rooted at a vertex $o$.  We define the \textit{derivative} of a function $f$ on $T$ as 
\[f'(v) = \begin{cases}f(v)-f(v^-) & \text{if $v \neq o$},\\\hfil 0 & \text{if $v = o$.}\end{cases}\]  By defining the \textit{backward shift} map $b:T\to T$ as  \[b(v) = \begin{cases}v^- & \text{if $v \neq o$}\\ o & \text{if $v = o$,}\end{cases}\] the derivative of a function on $T$ can be written as \[f'(v) = f(v)-f(b(v))\] for all $v \in T$. The derivative of a function on $T$ has many of the expected properties of the derivative from calculus.

\begin{lemma}\label{Lemma:DerivativeProperties} 
For each $\alpha$ in $\C$ and functions $f$ and $g$ on tree $T$,
\begin{enumerate} 
\item $(f+g)' = f'+g'$ and $(\alpha f)' = \alpha f'$.
\item $f' \equiv 0$ if and only if $f$ is constant.
\item if $f' \equiv g'$ then there exists a positive constant $C$ such that $f(v) = g(v) + C$ for all $v$ in $T$.
\end{enumerate}
\end{lemma}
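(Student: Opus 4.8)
The plan is to verify each part directly from the defining relation $f'(v) = f(v) - f(b(v))$, with the only non-mechanical step being an induction in part (ii) that exploits the tree structure. For part (i), I would substitute into the definition: for each $v \in T$, $(f+g)'(v) = (f+g)(v) - (f+g)(b(v))$, and expanding the pointwise sum separates this into $f'(v) + g'(v)$; the scalar identity $(\alpha f)'(v) = \alpha f'(v)$ follows in the same way. No case distinction between $v = o$ and $v \neq o$ is needed, since the formula $f'(v) = f(v) - f(b(v))$ already encodes $f'(o) = f(o) - f(o) = 0$.

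For part (ii), the forward direction is immediate: if $f$ is constant with value $c$, then $f'(v) = c - c = 0$ for every $v$. For the converse, suppose $f' \equiv 0$, so that $f(v) = f(b(v)) = f(v^-)$ for every $v \neq o$. The key step---and the only place the geometry of $T$ enters---is to conclude $f(v) = f(o)$ for all $v$ by induction on the length $|v|$. The base case $|v| = 0$ forces $v = o$, where the claim is trivial. For the inductive step, assuming every vertex of length $n$ takes the value $f(o)$, any $v$ with $|v| = n+1$ has $|v^-| = n$, whence $f(v) = f(v^-) = f(o)$. Since every vertex lies at finite distance from the root along its unique path to $o$, this shows $f$ is constant.

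Finally, part (iii) reduces to the previous two. Setting $h = f - g$, part (i) gives $h' = f' - g' \equiv 0$, and part (ii) then furnishes a constant $C$ with $h \equiv C$, that is, $f(v) = g(v) + C$ for all $v \in T$. I do not anticipate any genuine obstacle; the entire argument is a routine consequence of the definition, and the induction in part (ii) succeeds precisely because $T$ is connected and rooted, so that every vertex is joined to $o$ by a finite chain of parent steps along which the vanishing derivative transports the value $f(o)$ outward.
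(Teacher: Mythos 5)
Your proposal is correct and follows essentially the same route as the paper's own proof: direct expansion from the definition for (i), induction on $|v|$ for the converse in (ii), and reduction of (iii) to (i) and (ii) via $h = f - g$. The only (harmless) streamlining is your use of the unified formula $f'(v) = f(v) - f(b(v))$, valid since $b(o) = o$, to avoid the paper's separate case for $v = o$ in part (i).
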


\begin{proof}
First, let $v$ be an arbitrary vertex in $T^*$. Observe
\[\begin{aligned}
(f+g)'(o) &= 0 = f'(o) + g'(o)\\
(f+g)'(v) &= (f+g)(v) - (f+g)(b(v)) = f(v) + g(v) - f(b(v)) - g(b(v)) = f'(v) + g'(v)
\end{aligned}\]
and
\[\begin{aligned}
(\alpha f)'(o) &= 0 = \alpha f'(o)\\
(\alpha f)'(v) &= (\alpha f)(v) - (\alpha f)(b(v)) = \alpha f(v) - \alpha f(b(v)) = \alpha f'(v).
\end{aligned}\]

Next, suppose $f:T \to \C$ is constant, that is $f(v) = f(o)$ for all $v$ in $T$.  By definition, $f'(o) = 0$.  Let $w$ be an arbitrary vertex in $T^*$.  Then $f'(w) = f(w) - f(b(w)) = 0$.  So $f'$ is identically 0.  Now, suppose $g:T \to \C$ is such that $g'$ is identically 0 on $T$.  Let $w$ in $T^*$ with $|w| = 1$.  Then $0 = g'(w) = g(w) - g(o)$.  So $g(w) = g(o)$ for all $|w| = 1$.  By induction, we have $g(v) = g(o)$ for all $v$ in $T^*$.  Thus $g$ is constant. Finally, (iii) follows immediately from (i) and (ii), as $f'(v) = g'(v)$ for all $v$ in $T$ implies $f-g$ is constant.
\end{proof}

In order to define differentiation as an operator, we make use of the definition of a functional Banach space.
\begin{definition}[{\cite[Definition 1.1]{CowenMacCluer:1995}}]
A Banach space of complex valued functions on a set $\Omega$ is called a \textit{functional Banach space on $\Omega$} if the vector operations are the pointwise operations, $f(x) = g(x)$ for each $x$ in $\Omega$ implies $f=g$, $f(x)=f(y)$ for each function in the space implies $x=y$, and for each $x \in \Omega$, the linear functional $f \mapsto f(x)$ is continuous.
\end{definition}

\noindent A \textit{discrete function space on a tree $T$}, denoted by $\X(T)$ or simply $\X$, is a functional Banach space, whose elements are functions on tree $T$, endowed with norm $\|\cdot\|_{\X}$.  If $\X$ and $\Y$ are discrete functions spaces, we define the \textit{differentiation operator} $D:\X \to \Y$ as \[Df = f'\] for all $f$ in $\X$. The differentiation operator is linear by Theorem \ref{Lemma:DerivativeProperties}.  

While this section is concerned with the differentiation operator acting on an arbitrary discrete function space $\X$, we will apply these results to three specific spaces in Sections \ref{Section:LipschitzSpace}, \ref{Section:WeightedBanachSpace}, and 
\ref{Section:HardySpaces}.  We wish to define these spaces now to bring the results of this section into context.  

\begin{definition}\label{Definition:SpecificSpaces} Let $T$ be a tree.  
\begin{enumerate}
\item The \textit{Lipschitz space} $\LipT$ is the set
\[\left\{f:T \to \C\;\left|\; \sup_{v \in T^*}\;|f'(v)| < \infty\right.\right\}.\]
\item For a positive function $\mu:T \to \R$, called a \textit{weight}, the \textit{weighted Banach space} $\LmuinfT$ is the set \[\left\{f:T \to \C\;\left|\; \sup_{v \in T}\;\mu(v)|f(v)| < \infty\right.\right\}.\]
\item For $q$ in $\N$, let $T$ be a $(q+1)$-homogeneous tree; that is a tree with every vertex having $q+1$ neighbors. The \textit{Hardy space} $\HardypT$, for $1 \leq p < \infty$ is the set 
\[\left\{f:T\to\C\;\left|\;\sup_{n \in \N_0} M_p(n,f)<\infty\;\right.\right\},\]
where $M_p(0,f) = |f(o)|$ and for $n$ in $\N$
\[M_p(n,f) = \left(\frac{1}{(q+1)q^{n-1}}\sum_{|v|=n} |f(v)|^p\right)^{1/p}.\]
\end{enumerate}
\end{definition}
\noindent Discussions as to these spaces being functional Banach spaces are provided in their respective sections.

When studying the differentiation operator on a discrete function space, the following representation of $D$ will be very useful.  

\begin{lemma}\label{Lemma:DRepresentation} 
If $C_b$ maps a discrete function space $\X$ into itself, then $D$ maps $\X$ into $\X$ and \[D = I-C_b,\] where $I$ is the identity operator on $\X$ and $C_b$ is the composition operator induced by the backward shift map.
\end{lemma}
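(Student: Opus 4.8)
The plan is to prove the operator identity $Df = (I - C_b)f$ pointwise and then observe that boundedness of $C_b$ on $\X$ immediately transfers to $D$. The key observation is that the composition operator $C_b$ induced by the backward shift map $b$ acts by $C_b f = f \circ b$, so that $(C_b f)(v) = f(b(v))$ for every vertex $v \in T$. The definition of the derivative, rewritten in the excerpt as $f'(v) = f(v) - f(b(v))$ for all $v \in T$, is then precisely $f'(v) = f(v) - (C_b f)(v) = (If - C_b f)(v)$. Since this holds at every vertex and equality of functions in a discrete function space is pointwise equality (by the functional Banach space axioms), I would conclude $Df = f' = (I - C_b)f$ as elements of the function space, hence $D = I - C_b$ as operators.

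First I would record the pointwise computation above, taking care to note that it is valid at the root as well: at $v = o$ we have $b(o) = o$, so $(I - C_b)f(o) = f(o) - f(o) = 0 = f'(o)$, matching the definition of the derivative at the root. This uniformity across all of $T$ is exactly why the rewritten form $f'(v) = f(v) - f(b(v))$ (valid for all $v$, including $o$) is the right expression to use rather than the piecewise original definition. Next I would address the mapping claim: the hypothesis is that $C_b$ maps $\X$ into $\X$, and the identity operator $I$ trivially maps $\X$ into $\X$; since $\X$ is a vector space closed under the pointwise operations, the difference $I - C_b$ maps $\X$ into $\X$, and therefore $D = I - C_b$ sends $\X$ into $\X$ as asserted.

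There is essentially no analytic obstacle here; the content is entirely the algebraic identification of the derivative as $I - C_b$, and the one point requiring genuine care is confirming that $C_b$ is well defined as a map on functions, i.e.\ that $C_b f = f \circ b$ really does land in the ambient space of functions on $T$ and that the hypothesis ``$C_b$ maps $\X$ into itself'' is what licenses the conclusion that $D$ maps $\X$ into itself. The main (mild) subtlety I would flag is that the statement presupposes $C_b$ is a bounded operator on $\X$; boundedness of $D$ then follows from $D = I - C_b$ together with the triangle inequality $\|D\| \le \|I\| + \|C_b\|$, although the lemma as stated only asserts the mapping and identity, so I would restrict attention to verifying those two claims and leave norm estimates for subsequent results.
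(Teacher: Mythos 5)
Your proof is correct and takes essentially the same route as the paper's: a pointwise verification that $(Df)(v) = f(v) - f(b(v)) = ((I-C_b)f)(v)$ for every vertex, with the root case handled via $b(o)=o$, followed by the observation that $I$ and $C_b$ both map $\X$ into itself, hence so does $D = I - C_b$. One small correction to your closing aside: the lemma's hypothesis is only that $C_b$ maps $\X$ into itself, not that it is bounded, but since you explicitly set the norm estimate aside this does not affect your argument.
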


\begin{proof}
Suppose $f$ is a function in $\X$ and $v$ in $T^*$.  Observe 
\[(Df)(o) = f'(o) = 0 = f(o) - f(b(o)) = (If)(o) - (C_bf)(o) = ((I-C_b)f)(o)\]
and
\[(Df)(v) = f'(v) = f(v) - f(b(v)) = (If)(v) - (C_bf)(v) = ((I-C_b)f)(v).\] So $Df = (I-C_b)f$, which shows $D$ maps $\X$ into $\X$ and $D = I-C_b$.
\end{proof}

\subsection{Boundedness and Operator Norm}
The authors are interested in the study of bounded linear operators. On classical spaces, it is often the case that differentiation is unbounded.  So, our first goal in this section is to develop criteria for which differentiation on discrete functions spaces can be bounded.  For the bounded differentiation operator, we then consider expressions for the operator norm, and when such an operator can be an isometry.

In the literature, especially seen for composition operators, the proof of boundedness involves showing an operator $A:X \to Y$ maps into $X$.  This type of argument relies on the Closed Graph Theorem (which we provide for completeness), and usually is made as a remark without proof. 

\begin{theorem}[{\cite[Closed Graph Theorem]{MacCluer:2009}}] 
If $X$ and $Y$ are Banach spaces and $A:X \to Y$ is linear, then $A$ is bounded if and only if $\mathrm{graph}(A) = \{(x,Ax): x \in X\}$ is closed in $X\times Y$.
\end{theorem}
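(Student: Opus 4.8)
The plan is to prove the two implications separately, noting at the outset that the forward direction is routine while the reverse direction carries essentially all of the analytic content. First I would dispatch the easy implication: assuming $A$ is bounded (hence continuous), take any sequence $(x_n, A x_n)$ in $\mathrm{graph}(A)$ converging to a point $(x,y) \in X \times Y$. Convergence in the product means $x_n \to x$ and $A x_n \to y$; continuity of $A$ gives $A x_n \to A x$, and uniqueness of limits in the normed space $Y$ forces $y = A x$, so $(x,y) \in \mathrm{graph}(A)$. This shows the graph is closed and uses nothing beyond continuity of $A$.

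For the substantive reverse implication, suppose $\mathrm{graph}(A)$ is closed. The key structural observation is that $X \times Y$, equipped with a norm such as $\|(x,y)\| = \|x\|_X + \|y\|_Y$, is itself a Banach space, and a closed subspace of a Banach space is complete. Hence $G := \mathrm{graph}(A)$ is a Banach space in its own right. I would then introduce the two coordinate projections restricted to $G$: the map $\pi_1 : G \to X$ sending $(x, A x) \mapsto x$, and $\pi_2 : G \to Y$ sending $(x, A x) \mapsto A x$. Both are linear and contractive, hence bounded, and $\pi_1$ is a bijection precisely because $A$ is defined on all of $X$ and is single-valued.

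The crux of the argument — and the step I expect to be the main obstacle — is converting the boundedness of $\pi_1$ into boundedness of its inverse. Here I would invoke the Open Mapping Theorem (equivalently, the Bounded Inverse Theorem): a continuous linear bijection between Banach spaces has a continuous inverse. Applying this to $\pi_1 : G \to X$ yields that $\pi_1^{-1} : X \to G$, $x \mapsto (x, A x)$, is bounded. Since $A = \pi_2 \circ \pi_1^{-1}$ is then a composition of bounded operators, $A$ is bounded, which completes the proof.

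The genuine difficulty is concentrated entirely in the appeal to the Open Mapping Theorem, which itself rests on the Baire Category Theorem; everything else is bookkeeping. Because this is a classical cornerstone of functional analysis included here only for completeness, the natural course is to cite it, as the authors do, rather than reproduce the Baire-category machinery. Were a self-contained treatment desired, the effort would go into first establishing the Open Mapping Theorem, after which the graph-closedness equivalence above follows in a few lines.
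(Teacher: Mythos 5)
Your proof is correct, but note that the paper itself does not prove this statement: the Closed Graph Theorem is quoted from \cite{MacCluer:2009} and included, in the authors' words, only ``for completeness,'' before being applied in their boundedness criterion for $D$. So there is no proof in the paper to compare against, and the relevant question is only whether your argument stands on its own. It does, and it is the canonical one (indeed the same proof found in MacCluer): the forward direction via continuity of $A$ and uniqueness of limits in $Y$ is fine; in the reverse direction, $X\times Y$ with $\|(x,y)\| = \|x\|_X + \|y\|_Y$ is Banach, $G = \mathrm{graph}(A)$ is a \emph{linear subspace} --- this uses linearity of $A$ and is worth stating explicitly, since otherwise ``closed subspace, hence Banach'' has no content --- closedness then makes $G$ complete, $\pi_1\colon G\to X$ is a bounded linear bijection, the Bounded Inverse Theorem yields boundedness of $\pi_1^{-1}\colon x\mapsto (x,Ax)$, and $A = \pi_2\circ\pi_1^{-1}$ is a composition of bounded maps. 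You also correctly locate where the hypothesis of closedness is actually consumed (completeness of $G$, without which the Open Mapping Theorem does not apply) and where the genuine depth lies (Baire category inside the Open Mapping Theorem). Since this is a cited classical result and not a contribution of the paper, the authors' decision to quote rather than prove is the right editorial call --- a point your own closing remark concedes --- but your writeup is a faithful self-contained substitute, modulo the Open Mapping Theorem.
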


\noindent We offer the following characterization of boundedness for the differentiation operator, as well as provide a proof.

\begin{theorem}\label{Theorem:BoundednessCGT} 
The operator $D$ is bounded on a discrete function space $\X$ if and only if $D$ maps $\X$ into $\X$.
\end{theorem}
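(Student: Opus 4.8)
The statement is an "if and only if," and one direction is essentially trivial: if $D$ is bounded on $\X$, then by definition $D$ is a bounded operator from $\X$ to $\X$, so in particular $Df \in \X$ for every $f \in \X$, which is exactly the assertion that $D$ maps $\X$ into $\X$. The plan is therefore to spend essentially all the effort on the converse: assuming $D$ maps $\X$ into $\X$, I want to conclude that $D$ is bounded. The natural tool here is the Closed Graph Theorem quoted just above, since $\X$ is a Banach space and $D$ is linear by Lemma~\ref{Lemma:DerivativeProperties}.

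**The core argument via the Closed Graph Theorem.**
To apply the Closed Graph Theorem I must verify that $\mathrm{graph}(D) = \{(f, Df) : f \in \X\}$ is closed in $\X \times \X$. So I would take a sequence $(f_n, Df_n)$ in the graph converging in $\X \times \X$ to some pair $(f, g)$; that is, $f_n \to f$ and $Df_n = f_n' \to g$, both in the norm of $\X$, and I must show $g = Df$, i.e. $g = f'$. The key observation that makes this work is that $\X$ is a \emph{functional} Banach space, so for each fixed vertex $x \in T$ the evaluation functional $h \mapsto h(x)$ is continuous on $\X$. Norm convergence $f_n \to f$ in $\X$ therefore forces pointwise convergence $f_n(x) \to f(x)$ at every vertex $x$, and likewise $f_n'(x) \to g(x)$ at every vertex.

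**Identifying the limit pointwise.**
With pointwise convergence in hand, I would fix an arbitrary vertex $v$ and compute the limit of $f_n'(v)$ directly from the definition $f_n'(v) = f_n(v) - f_n(b(v))$. Passing to the limit using continuity of the two evaluation functionals at $v$ and at $b(v)$, I get
\[
g(v) = \lim_{n} f_n'(v) = \lim_n \bigl(f_n(v) - f_n(b(v))\bigr) = f(v) - f(b(v)) = f'(v) = (Df)(v).
\]
Since $v$ was arbitrary, $g$ and $Df$ agree at every vertex, and hence $g = Df$ as elements of $\X$ (again using that a functional Banach space separates functions by their values). This shows $(f,g) = (f, Df)$ lies in the graph, so the graph is closed, and the Closed Graph Theorem yields boundedness of $D$.

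**Anticipated obstacle.**
I expect no deep obstruction; the only point requiring care is the legitimacy of the interchange of limit and evaluation, which is precisely what the functional-Banach-space hypothesis (continuity of point evaluations) is designed to license. One should be slightly careful that the hypothesis gives us $Df \in \X$ for each $f$ \emph{a priori}, so that the graph genuinely lives in $\X \times \X$ and the Closed Graph Theorem is applicable; this is exactly the content of the assumption that $D$ maps $\X$ into $\X$. Beyond that, the argument is routine, and the whole proof reduces to recognizing that membership in a functional Banach space converts norm convergence into vertexwise convergence, which trivializes the closed-graph verification.
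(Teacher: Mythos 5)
Your proposal is correct and matches the paper's own proof essentially step for step: both verify closedness of the graph via the Closed Graph Theorem, using continuity of the evaluation functionals (at $v$ and at $b(v)$) to pass from norm convergence of $f_n \to f$ and $Df_n \to g$ to the pointwise identity $g(v) = f(v) - f(b(v)) = (Df)(v)$. Your uniform use of the formula $f'(v) = f(v) - f(b(v))$ at every vertex, including $v = o$ where $b(o) = o$, is a harmless streamlining of the paper's separate treatment of the root.
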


\begin{proof}
As the forward direction follows by definition, it suffices to that that $D$ is bounded on $\X$ if $D$ maps $\X$ into $\X$, for which we will use the Closed Graph Theorem.  Let $\{f_n\}$ be a sequence in $\X$ converging to a function $f$ in $\X$; additionally, suppose the sequence $\{Df_n\}$ converges in $\X$ to a function $g$. We then need to show $Df=g$.  

First note that $(Df_n)(o) = f'_n(o) = 0$ for all $n$ in $\N$.  Thus $g(o) = 0$, since norm convergence implies point-wise convergence in a functional Banach space.  Additionally, for an arbitrary vertex $v$ in $T^*$, observe \[(Df_n)(v) = f'_n(v) = f_n(v)-f_n(b(v)) = f_n(v) - K_{b(v)}(f_n)\] for the evaluation functional $K_{b(v)}$. Since $\X$ is a functional Banach space, $K_{b(v)}$ is a continuous linear functional.  Together with the hypothesis that $\{f_n\}$ converges to $f$, this implies that $\{K_{b(v)}(f_n)\}$ converges to $K_{b(v)}(f)$. Thus, for each $v$ in $T$, we have that $\{(Df_n)(v)\}$ converges to \[f(v) - K_{b(v)}(f) = f(v) - f(b(v)) = f'(v) = (Df)(v).\]  Again, since norm convergence implies pointwise convergence, we have that $\{(Df_n)(v)\}$ converges to $Df(v)$.  This implies that $(Df)(v)=g(v)$ for every $v$ in $T$ and thus $Df=g$ as desired.  Therefore, we conclude $D$ is bounded on $\X$ by the Closed Graph Theorem.
\end{proof}

\noindent We can also characterize the boundedness of the differentiation operator through Lemma \ref{Lemma:DRepresentation}.  

\begin{theorem}\label{Theorem:BoundednessCphi} 
The operator $D$ is bounded on a discrete function space $\X$ if and only if $C_b$ is bounded on $\X$.
\end{theorem}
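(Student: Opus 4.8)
The plan is to exploit the algebraic identity $D = I - C_b$ furnished by Lemma \ref{Lemma:DRepresentation}, together with the boundedness criterion of Theorem \ref{Theorem:BoundednessCGT}, reducing the claimed equivalence to two short arguments that mirror one another. The key pointwise fact underlying both directions is that for any function $f$ on $T$ and any vertex $v$ we have $(C_b f)(v) = f(b(v)) = f(v) - f'(v) = f(v) - (Df)(v)$, where the case $v = o$ is subsumed since $b(o) = o$ forces $(Df)(o) = 0$. Thus, as functions on $T$, the identity $C_b f = f - Df$ holds for every $f$, independently of whether either side lies in $\X$.

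For the reverse direction I would suppose $C_b$ is bounded on $\X$. Then in particular $C_b$ maps $\X$ into $\X$, so Lemma \ref{Lemma:DRepresentation} applies and yields $D = I - C_b$ as operators on $\X$, with $D$ mapping $\X$ into $\X$. One may then conclude boundedness of $D$ either directly, since the difference of the bounded operators $I$ and $C_b$ is bounded, or by invoking Theorem \ref{Theorem:BoundednessCGT} with the fact that $D$ maps $\X$ into $\X$.

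For the forward direction I would suppose $D$ is bounded on $\X$. By definition a bounded operator maps $\X$ into $\X$, so $Df \in \X$ for every $f \in \X$. The point to verify is that $C_b$ actually maps $\X$ into $\X$: from the pointwise identity above, $C_b f = f - Df$, and since $\X$ is a vector space under pointwise operations with both $f$ and $Df$ in $\X$, the function $C_b f$ lies in $\X$. Hence $C_b = I - D$ as operators on $\X$, and being the difference of the bounded operators $I$ and $D$, it is bounded.

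The arguments are routine once the identity $C_b f = f - Df$ is in hand; the only step requiring care is the forward direction, where one must first confirm that $C_b$ maps $\X$ into $\X$ before writing $C_b = I - D$ as an operator equation and reading off boundedness. This is precisely the place where the closure of $\X$ under the pointwise vector-space operations is essential, and it is the analogue, on the $C_b$ side, of the role played by Theorem \ref{Theorem:BoundednessCGT} on the $D$ side.
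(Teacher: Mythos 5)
Your proof is correct and follows essentially the same route as the paper: both directions rest on the decomposition $D = I - C_b$ (equivalently $C_b = I - D$), with Theorem \ref{Theorem:BoundednessCGT} available for the reverse direction. Your explicit verification of the pointwise identity $C_b f = f - Df$ and of the fact that $C_b$ maps $\X$ into $\X$ in the forward direction is a welcome bit of care that the paper's terser proof leaves implicit, but it is the same argument.
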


\begin{proof}
Suppose $D$ is bounded on $\X$.  Then $C_b = I-D$ maps $\X$ into $\X$ and is bounded since $I$ is bounded on $\X$.  Likewise, if $C_b$ is bounded on $\X$, then $D$ maps $\X$ into $\X$.  By Theorem \ref{Theorem:BoundednessCGT}, $D$ is bounded on $\X$.
\end{proof}

\begin{remark}
Composition operators have already been studied on the Lipschitz space \cite{AllenColonnaEasley:2014}, the weighted Banach spaces \cite{AllenPons:2018}, and the Hardy spaces \cite{MuthukumarPonnusamy:2020}. Thus, characterizations of boundedness for the differentiation operator on these specific spaces utilize Theorem \ref{Theorem:BoundednessCphi} (see Theorems \ref{Theorem:BoundednessLip}, \ref{Theorem:BoundednessLmuinf}, and \ref{Theorem:BoundednessHardyp}). However, when one pays attention to the study of operators on these spaces, it is interesting to note that composition operators are not among the first to be studied.  In fact, it is typical for multiplication operators to first be studied on discrete spaces of the kind following the development of the Lipschitz space.  As new discrete spaces are defined, in the absence of results on composition operators,  Theorem \ref{Theorem:BoundednessCGT} can provide the means of characterizing boundedness of $D$. Theorem \ref{Theorem:BoundednessCphi} may then provide insight on conditions to characterize the boundedness of the composition operator $C_\varphi$ where $\varphi$ is any self-map of $T$.  As we make other connections between the differentiation operator $D$ and the specific composition operator $C_b$ in this section, the idea that the study of $D$ can lead to insight as to the behavior of $C_\varphi$ will be further supported.
\end{remark}

Once the boundedness of $D$ is established, it is natural to determine bounds on the norm or even an exact expression.  Determining the operator norm, or even establishing bounds, requires knowledge of the norms of the domain and codomain.  Lemma \ref{Lemma:DRepresentation} allows for the determination of bounds in terms of the norm of $C_b$.

\begin{corollary}\label{Corollary:OperatorNormBounds} 
Suppose $\X$ is a discrete function space on which $D$ is bounded. Then \[1-\|C_b\| \leq \|D\| \leq 1+\|C_b\|.\]
\end{corollary}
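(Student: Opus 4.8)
The plan is to read off both inequalities directly from the operator identity $D = I - C_b$ furnished by Lemma \ref{Lemma:DRepresentation}. The standing hypothesis that $D$ is bounded guarantees, via Theorem \ref{Theorem:BoundednessCphi}, that $C_b$ is bounded as well, so every operator norm appearing in the statement is finite and the manipulations below are legitimate. Since $\X$ is a nontrivial Banach space, the identity operator satisfies $\|I\| = 1$; this is the only numerical fact needed beyond the triangle inequality for the operator norm.

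For the upper bound, I would apply the triangle inequality directly to $D = I - C_b$, giving
\[\|D\| = \|I - C_b\| \leq \|I\| + \|C_b\| = 1 + \|C_b\|.\]
For the lower bound, I would recast the same identity as $I = D + C_b$ and apply the triangle inequality once more, obtaining
\[1 = \|I\| = \|D + C_b\| \leq \|D\| + \|C_b\|,\]
and rearranging yields $\|D\| \geq 1 - \|C_b\|$. Combining the two displays gives the desired two-sided estimate.

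There is no genuine obstacle here: the whole argument is the triangle inequality applied to the decomposition from Lemma \ref{Lemma:DRepresentation}, used once in the form $D = I - C_b$ for the upper bound and once in the form $I = D + C_b$ for the lower bound. The only steps requiring a moment's care are confirming that $\|I\| = 1$, which relies on $\X$ being nonzero, and that $\|C_b\|$ is finite, which follows from the boundedness of $D$ through Theorem \ref{Theorem:BoundednessCphi}; both are immediate under the stated assumptions.
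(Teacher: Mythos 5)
Your proposal is correct and follows essentially the same route as the paper: both arguments invoke Theorem \ref{Theorem:BoundednessCphi} to get boundedness of $C_b$, then apply the triangle inequality to $D = I - C_b$ for the upper bound and to $I = D + C_b$ for the lower bound, using $\|I\| = 1$. No gaps to report.
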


\begin{proof}
First observe that the identity operator $I$ is an isometry on $\X$, and thus is bounded and $\|I\|=1$.  Since $D$ is bounded on $X$, then so is $C_b$ by Theorem \ref{Theorem:BoundednessCphi}. To show the upper bound, observe 
\[\|D\| = \|I - C_b\| \leq \|I\| + \|C_b\| = 1 + \|C_b\|.\]
The lower bound follows since 
\[1 = \|I\| = \|D + C_b\| \leq \|D\| + \|C_b\|.\] The result then follows.
\end{proof}

\noindent Due to the representation $D=I-C_b$, we conjecture that the connection between $D$ and $C_b$ follows through the norm as follows.

\begin{conjecture}\label{Conjecture:OperatorNorm} 
Suppose $\X$ is a discrete function space on which $D$ is bounded. Then \[\|D\| = 1+\|C_b\|.\]
\end{conjecture}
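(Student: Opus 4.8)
The plan is to prove the conjecture by establishing the reverse inequality $\|D\|\ge 1+\|C_b\|$, since the upper bound $\|D\|\le 1+\|C_b\|$ is already Corollary \ref{Corollary:OperatorNormBounds}. Writing $D=I-C_b$ via Lemma \ref{Lemma:DRepresentation}, the content is that the reverse triangle inequality is saturated, $\|I-C_b\|=\|I\|+\|C_b\|$; this norm additivity is false for general operators, so the proof must use the specific structure of $C_b$. Concretely, I would produce, for each $\varepsilon>0$, a unit vector $f\in\X$ and a functional $\Lambda\in\X^*$ with $\|\Lambda\|\le 1$ satisfying $\Lambda(f)\ge 1-\varepsilon$ and $\Lambda(C_b f)\le-(\|C_b\|-\varepsilon)$. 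Then
\[\|D\|\ge|\Lambda(Df)|=|\Lambda(f)-\Lambda(C_b f)|\ge(1-\varepsilon)+(\|C_b\|-\varepsilon),\]
and letting $\varepsilon\to 0$ finishes the argument. Thus everything reduces to finding a single norm-one functional that norms $f$ positively while norming $C_b f$ in the negative direction.

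The mechanism I would exploit is that $C_b$ shifts support outward: since $(C_bf)(v)=f(v^-)$, the values of $C_b f$ on level $n$ are copied from the values of $f$ on level $n-1$. Starting from $g$ with $\|g\|=1$ and $\|C_b g\|\ge\|C_b\|-\varepsilon$, the idea is to reverse signs across the parent--child pairs on which $C_b$ nearly attains its norm, so that in the difference $f-C_b f$ the two contributions add rather than cancel. This is transparent for the weighted space $\Lmuinf$, where $\|C_b\|$ is the supremum over $v\in T^*$ of $\mu(v)/\mu(v^-)$: choosing a pair $v\in\ch(w)$ with $\mu(v)/\mu(w)$ within $\varepsilon$ of $\|C_b\|$ and setting $f(w)=-1/\mu(w)$, $f(v)=1/\mu(v)$, and $f\equiv 0$ elsewhere gives $\|f\|=1$ and $\mu(v)\,|(Df)(v)|=1+\mu(v)/\mu(w)$, so that $\|D\|\ge 1+\|C_b\|-\varepsilon$. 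Analogous level-by-level constructions are what I would pursue for $\Lip$ and $\Hardyp$, and these space-by-space verifications are the realistic route to the three named cases and the source of the ``concrete evidence'' cited in the introduction.

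The hard part, and the reason this is posed as a conjecture rather than a theorem, is the passage to an arbitrary discrete function space. Two features of $C_b$ obstruct a purely algebraic argument: $b$ is not injective, so there is no forward-shift composition operator to invert $C_b$, and the outward support shift does not, in an abstract Banach space, force any additivity of norms. Saturating the reverse triangle inequality requires the geometry of $\X$ to allow $f$ and $-C_b f$ to be simultaneously extremal for one functional, whereas a general functional Banach norm need carry no disjoint-support or lattice compatibility with the tree levels to guarantee this. I therefore expect that a general proof will need an extra structural hypothesis on $\X$---for instance monotonicity of the norm along levels, or a graded decomposition compatible with $b$ under which a function supported in a ball and its image on the newly reached level norm independently---and that, failing such a hypothesis, a counterexample may exist. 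Isolating the correct hypothesis, or producing such a counterexample, is the central obstacle.
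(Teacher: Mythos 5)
Your reading of the statement's status is exactly right, and your plan coincides with what the paper actually does: the paper never proves the equality in general either. It establishes the upper bound $\|D\| \leq 1+\|C_b\|$ (Corollary \ref{Corollary:OperatorNormBounds}) and then verifies equality space by space with precisely the parent--child sign-flip mechanism you describe: the characteristic function $\bigchi_w$, whose second derivative has value $-2$ at children of $w$, on the Lipschitz space (Theorem \ref{Theorem:NormLipschitz}); the globally alternating function $g(v) = (-1)^{|v|}/\mu(v)$ on $\Lmuinf$ (Theorem \ref{Theorem:BoundednessLmuinf}); and the radially constant function equal to $-1$ at $o$, $1$ on the first level, and $0$ elsewhere on $\Hardyp$ (Theorem \ref{Theorem:NormHardyp}). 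Your localized two-point witness on $\Lmuinf$ is an $\varepsilon$-version of the paper's global $g$ and proves the same lower bound $\|D\| \geq 1+\sup_{v \in T^*} \mu(v)/\mu(b(v))$; the general-case obstruction you identify is likewise acknowledged by the paper, which lists this as Open Question 1.

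There is, however, one genuine error in your $\Lmuinf$ verification, and it sits exactly where the conjecture breaks. The norm of $C_b$ is not $\sup_{v \in T^*} \mu(v)/\mu(v^-)$: by \cite[Theorem 3.1]{AllenPons:2018} the supremum runs over all of $T$, and since $b(o)=o$ the root contributes $\mu(o)/\mu(b(o)) = 1$, so $\|C_b\| = \max\left\{1,\ \sup_{v \in T^*} \mu(v)/\mu(v^-)\right\}$. (For bounded $\mu$ this is forced in any case, since the constants lie in $\Lmuinf$ and are fixed by $C_b$.) Now take $\mu(v) = 2^{-|v|}$: every edge ratio is $1/2$, so Theorem \ref{Theorem:BoundednessLmuinf} gives $\|D\| = 3/2$, while $1+\|C_b\| = 2$. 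Thus the conjecture as stated is \emph{false}, and the counterexample you anticipated in your final paragraph already lives among the weighted Banach spaces; your two-point construction cannot bridge the gap here because no parent--child pair realizes the ratio $1$ contributed by the fixed root. The same slip occurs in the paper itself --- the discussion following Corollary \ref{Corollary:BoundednessLmuinf} interchanges the suprema over $T$ and $T^*$, which is harmless for boundedness but not for the norm value, so the paper's claim that Theorem \ref{Theorem:BoundednessLmuinf} supports the conjecture is valid only when $\sup_{v \in T^*} \mu(v)/\mu(v^-) \geq 1$. Your instinct that the general statement needs an extra structural hypothesis is therefore correct: any viable version must at least exclude weights strictly contracting along every edge, e.g.\ by requiring that $\|C_b\|$ be approximated by the ratios taken over $T^*$ rather than by the degenerate fixed point of $b$.
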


\noindent We will see that Conjecture \ref{Conjecture:OperatorNorm} holds for the Lipschitz space (see Theorem \ref{Theorem:NormLipschitz}), the weighted Banach spaces (see Theorem \ref{Theorem:BoundednessLmuinf}), and the Hardy spaces (see Theorem \ref{Theorem:NormHardyp}).

We now consider the question of whether differentiation is an isometry when acting on a discrete function space.  First note that if a bounded linear operator $A$ is an isometry, then it is injective and $\|A\|=1$.  If Conjecture \ref{Conjecture:OperatorNorm} is true, then the differentiation operator acting on a discrete function space should not be an isometry. The condition that $\|D\|=1$ would only be true if $\|C_b\| = 0$.  Thus, we conjecture that $D$ is not an isometry on a discrete function space.

\begin{conjecture}\label{Conjecture:Isometry} 
The operator $D$ is not an isometry on any discrete function space $\X$.
\end{conjecture}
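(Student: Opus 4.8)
The plan is to reduce the isometry question to the kernel of $D$ and to the operator norm. Since any isometry is injective, and since by Lemma \ref{Lemma:DerivativeProperties}(ii) the kernel of $D$ consists precisely of the constant functions lying in $\X$, the conjecture is immediate for any space that contains a nonzero constant: in that case $D$ annihilates a nonzero element and so fails to be injective, let alone isometric. This observation alone settles the three concrete spaces $\LipT$, $\LmuinfT$, and $\HardypT$, each of which contains the constants. The substance of the general statement therefore lies in those (possibly pathological) spaces $\X$ containing no nonzero constant, where $D$ is injective and isometry must be excluded by other means.

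For the general case I would argue through the operator norm, using $D = I - C_b$ from Lemma \ref{Lemma:DRepresentation}. An isometry necessarily has norm $1$, so it suffices to show $\|D\| > 1$ on every discrete function space. The natural route is Conjecture \ref{Conjecture:OperatorNorm}, which would give $\|D\| = 1 + \|C_b\|$; one would then only need $\|C_b\| > 0$. This last fact is easy and deserves to be isolated as a first step. The backward shift $b$ is surjective, since $o = b(o)$ and, as $T$ has no terminal vertices, every vertex $u$ has a child $w$, whence $u = b(w)$. Consequently $C_b f = f \circ b \equiv 0$ forces $f \equiv 0$, so $C_b = 0$ would make $\X = \{0\}$, contradicting that a functional Banach space must separate the infinitely many vertices of $T$. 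Thus $\|C_b\| > 0$, and conditional on the norm identity we obtain $\|D\| = 1 + \|C_b\| > 1$, so $D$ is not an isometry.

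The main obstacle is exactly the general norm identity of Conjecture \ref{Conjecture:OperatorNorm}: the bounds from Corollary \ref{Corollary:OperatorNormBounds} only yield $1 - \|C_b\| \le \|D\| \le 1 + \|C_b\|$, neither of which prevents $\|D\| = 1$. A direct attack avoiding the norm conjecture would instead try to exhibit, for each constant-free $\X$, an explicit $f$ with $\|Df\|_{\X} \ne \|f\|_{\X}$. Here the rigidity of $C_b$ is suggestive: $C_b f$ always agrees with $f$ at the root and is constant on each sibling set $\ch(u)$, so $Df$ is forced to vanish at $o$ and to record only the increments of $f$. Turning this structural rigidity into a norm inequality against an \emph{arbitrary} norm $\|\cdot\|_{\X}$ is precisely the difficulty, since antiderivatives of a given function need not lie in $\X$ and the norm is not assumed to interact with the tree geometry in any way. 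I expect that any complete proof must either establish Conjecture \ref{Conjecture:OperatorNorm} in full generality or locate a norm-free injectivity or spectral obstruction valid for all constant-free discrete function spaces.
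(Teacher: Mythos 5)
This statement is an open conjecture in the paper, not a theorem: the authors prove it only for spaces containing the constant functions (Theorem \ref{Theorem:IsometriesWithConstants}, via exactly the kernel argument you give), and otherwise motivate it just as you do, observing that Conjecture \ref{Conjecture:OperatorNorm} together with $\|C_b\| > 0$ would force $\|D\| > 1$. Your proposal therefore matches the paper's reasoning precisely and correctly diagnoses the constant-free case as the genuine open difficulty (cf.\ Open Questions 1 and 2 in Section \ref{Section:OpenQuestions}); your surjectivity-of-$b$ argument that $\|C_b\| > 0$ on every nontrivial discrete function space is a small detail the paper leaves implicit, and it is correct.
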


\noindent As with the operator norm, in order to prove or disprove Conjecture \ref{Conjecture:Isometry} we would need knowledge of $\|\cdot\|_\X$.  However, we can determine a class of discrete function spaces on which $D$ cannot be an isometry.

\begin{theorem}\label{Theorem:IsometriesWithConstants} 
If $\X$ is a discrete function space containing the constant functions then $D$ is not an isometry on $\X$.
\end{theorem}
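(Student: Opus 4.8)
The plan is to exploit the fact that differentiation annihilates constant functions, so that $\ker D$ is nontrivial as soon as $\X$ contains a nonzero constant. By part (ii) of Lemma \ref{Lemma:DerivativeProperties}, a function $f$ on $T$ satisfies $f' \equiv 0$ precisely when $f$ is constant; hence every constant function lies in the kernel of $D$. Since an isometry is necessarily injective, producing a single nonzero element of $\ker D$ inside $\X$ immediately disqualifies $D$ from being an isometry.

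Concretely, I would fix the constant function $f \equiv 1$, which belongs to $\X$ by hypothesis. Because $\X$ is a functional Banach space and $f$ is not the zero function, positivity of the norm gives $\|f\|_{\X} > 0$. On the other hand, $Df = f' \equiv 0$ by the above, so $\|Df\|_{\X} = 0$. Comparing, $\|Df\|_{\X} = 0 \neq \|f\|_{\X}$, and therefore $D$ does not preserve norms. (One could equally phrase the conclusion as the failure of injectivity: $D(1) = D(0) = 0$ with $1 \neq 0$ in $\X$.)

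I expect no genuine obstacle in this argument: everything reduces to the presence of a nonzero element of $\ker D$, and the hypothesis that $\X$ contains the constants supplies exactly such an element. The only point requiring care is the appeal to norm positivity, which is guaranteed by the Banach-space structure of $\X$, together with the characterization of $\ker D$ furnished by Lemma \ref{Lemma:DerivativeProperties}; both are available to us. This also clarifies why the hypothesis is natural—the three spaces studied later (the Lipschitz space and the weighted Banach and Hardy spaces) either contain the constants or otherwise admit nonconstant elements of $\ker D$, so a separate, space-specific argument is needed only when constants are absent.
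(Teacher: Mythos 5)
Your proposal is correct and follows essentially the same route as the paper's proof: both arguments observe that by Lemma \ref{Lemma:DerivativeProperties} the constant functions form the kernel of $D$, so the hypothesis supplies a nonzero element of $\ker D$, violating the injectivity (equivalently, the norm-preservation) that any isometry requires. Your explicit norm computation $\|Df\|_{\X} = 0 \neq \|f\|_{\X} > 0$ is a harmless elaboration of the paper's injectivity phrasing, not a different method.
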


\begin{proof}
All isometries must be injective, as a direct consequence of the definition.  The operator $D$ is not injective if the kernel of $D$ is non-trivial.  The only non-zero elements of $\ker(D)$ can be the constant functions by Lemma \ref{Lemma:DerivativeProperties}.  Thus, if $\X$ contains the constant functions, then $D$ is not injective, and thus not an isometry on $\X$.
\end{proof}

\subsection{Spectrum}
For a bounded linear operator acting on a Banach space, the spectrum offers vital information pertaining to invariant subspaces.  The interested reader is referred to \cite[Section 4.5]{MacCluer:2009}] or \cite[Section VII.6]{Conway:1990}.  We will collect information about the spectrum of the differentiation operator acting on a discrete function space of a tree.

\begin{remark}\label{Remark:Spectrum}
From \cite[Theorem 5.10]{MacCluer:2009}, the spectrum of such an operator $A$ is a non-empty, compact subset of $\C$ contained in the closed disk centered at 0 of radius $\|A\|$, that is \[\sigma(D) \subseteq \overline{D(0,\|A\|)} = \{\lambda : |\lambda| \leq \|A\|\}.\]
\end{remark}

The representation of $D$ as $I-C_b$ from Lemma \ref{Lemma:DRepresentation} has an immediate connection with the spectrum.  In fact, just as the operators $D$ and $C_b$ are connected, so are their spectra.

\begin{theorem}\label{Theorem:Spectrum} 
If $D$ is bounded on a discrete function space $\X$ then 
\[\sigma(D) = 1-\sigma(C_b) = \{1-\lambda : \lambda \in \sigma(C_b)\}.\]
\end{theorem}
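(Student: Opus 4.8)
The plan is to exploit the representation $D = I - C_b$ from Lemma~\ref{Lemma:DRepresentation} and reduce the claim to an elementary translation of the resolvent set, thereby avoiding any appeal to the full spectral mapping machinery. First I would record that, since $D$ is bounded on $\X$, the operator $C_b$ is also bounded by Theorem~\ref{Theorem:BoundednessCphi}; hence $\sigma(C_b)$ is a well-defined nonempty compact subset of $\C$ and both spectra appearing in the statement genuinely make sense.

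The heart of the argument is the algebraic identity
\[D - \lambda I = (I - C_b) - \lambda I = (1-\lambda)I - C_b,\]
valid for every $\lambda \in \C$. By definition $\lambda \in \sigma(D)$ precisely when $D - \lambda I$ fails to be invertible, and the identity above shows $D - \lambda I$ is invertible if and only if $(1-\lambda)I - C_b$ is invertible, which is exactly the statement that $1 - \lambda \notin \sigma(C_b)$. Taking complements yields $\lambda \in \sigma(D)$ if and only if $1 - \lambda \in \sigma(C_b)$, that is, if and only if $\lambda \in 1 - \sigma(C_b)$. This double implication delivers the set equality $\sigma(D) = 1 - \sigma(C_b)$, and the second equality in the statement is merely the unpacking of the notation $1 - \sigma(C_b) = \{1-\lambda : \lambda \in \sigma(C_b)\}$.

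I expect no genuine obstacle here: the result is the spectral mapping theorem specialized to the degree-one polynomial $p(z) = 1 - z$, and the one-line resolvent translation makes the proof self-contained. The only point meriting care is the bookkeeping on invertibility, namely confirming that the affine map $S \mapsto (1-\lambda)I - S$ transports the resolvent set of $C_b$ bijectively onto that of $D$; but this is immediate from the displayed identity and requires no estimate. As a consistency check, Remark~\ref{Remark:Spectrum} and Corollary~\ref{Corollary:OperatorNormBounds} agree with the conclusion, since $\sigma(C_b) \subseteq \overline{D(0,\|C_b\|)}$ forces $\sigma(D) = 1 - \sigma(C_b)$ to lie in $\overline{D(0, 1+\|C_b\|)}$, matching the upper bound $\|D\| \leq 1 + \|C_b\|$.
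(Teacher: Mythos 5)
Your proposal is correct and follows essentially the same route as the paper: both rest on the algebraic identity $D-\lambda I=(1-\lambda)I-C_b$ coming from $D=I-C_b$, the paper merely presenting it as two separate containments where you phrase it as a single biconditional. Your additional remarks (boundedness of $C_b$ via Theorem~\ref{Theorem:BoundednessCphi}, the consistency check against Remark~\ref{Remark:Spectrum}) are sound but not needed beyond what the paper already does.
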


\begin{proof}
First, suppose $\lambda$ in $\sigma(C_b)$.  We want to show $1-\lambda$ is an element of $\sigma(D)$.  Notice \[D-(1-\lambda)I = (I-C_b) - (1-\lambda) I = \lambda I - C_b.\] Since $\lambda \in \sigma(C_b), \lambda I - C_b$ is not invertible, making $1-\lambda$ an element of $\sigma(D)$.

Next, suppose $\lambda$ in $\sigma(D)$.  We want to show there exists $\mu$ in $\sigma(C_b)$ such that $\lambda = 1-\mu$.  This is equivalent to showing $1-\lambda \in \sigma(C_b)$.  This follows from the calculation \[(1-\lambda)I - C_b = (1-\lambda)I - (I-D) = D-\lambda I.\]  So $1-\lambda$ is an element of $\sigma(C_b)$.  Thus, we have shown $\sigma(D) = 1-\sigma(C_b)$, as desired. 
\end{proof}

\noindent Connecting the operator norm to the spectrum via Remark \ref{Remark:Spectrum}, we obtain the following bounding set for $\sigma(D)$. On the one hand, $\sigma(D)$ is a closed subset of the closed disk $\{\lambda: |\lambda| \leq \|D\|\}$.  Likewise, $\sigma(C_b)$ is a closed subset of $\{\lambda: |\lambda| \leq \|C_b\|\}$.  Then $\sigma(D) = 1-\sigma(C_b)$ is also a closed subset of $\{\lambda: |\lambda-1| \leq \|C_b\|\}$.

\begin{corollary}\label{Corollary:SpectrumBound} 
If $D$ is bounded on discrete function space $\X$ then $\sigma(D)$ is a closed subset of \[\{\lambda: |\lambda| \leq \|D\|\} \cap \{\lambda: |\lambda - 1| \leq \|C_b\|\}.\]
\end{corollary}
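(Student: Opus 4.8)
The plan is to derive each of the two defining inequalities of the intersection separately from the spectral facts already established, and then take their conjunction. Since $D$ is assumed bounded on $\X$, Remark \ref{Remark:Spectrum} applies directly to $D$ and yields that $\sigma(D)$ is a non-empty compact subset of $\{\lambda : |\lambda| \leq \|D\|\}$; in particular $\sigma(D)$ is closed. This disposes of the first half of the intersection and of the closedness assertion in one stroke.

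For the second half, I would first invoke Theorem \ref{Theorem:BoundednessCphi} to conclude that $C_b$ is bounded on $\X$, so that Remark \ref{Remark:Spectrum} may be applied to $C_b$ as well, giving $\sigma(C_b) \subseteq \{\mu : |\mu| \leq \|C_b\|\}$. The key step is then to transport this containment across the affine relation $\sigma(D) = 1 - \sigma(C_b)$ from Theorem \ref{Theorem:Spectrum}. Concretely, if $\lambda \in \sigma(D)$ then $\lambda = 1 - \mu$ for some $\mu \in \sigma(C_b)$, whence $|\lambda - 1| = |\mu| \leq \|C_b\|$; thus $\sigma(D) \subseteq \{\lambda : |\lambda - 1| \leq \|C_b\|\}$.

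Combining the two containments gives $\sigma(D) \subseteq \{\lambda : |\lambda| \leq \|D\|\} \cap \{\lambda : |\lambda - 1| \leq \|C_b\|\}$, and since $\sigma(D)$ is closed it is a closed subset of this intersection, as claimed. I do not expect a genuine obstacle here: every ingredient—the spectral bound of Remark \ref{Remark:Spectrum}, the boundedness transfer of Theorem \ref{Theorem:BoundednessCphi}, and the spectral identity $\sigma(D) = 1 - \sigma(C_b)$ of Theorem \ref{Theorem:Spectrum}—is already in hand. The only point requiring minor care is verifying that the image of the centered disk $\{\mu : |\mu| \leq \|C_b\|\}$ under the map $\mu \mapsto 1 - \mu$ is exactly the disk of the same radius centered at $1$, which is immediate from the identity $|(1-\mu) - 1| = |\mu|$.
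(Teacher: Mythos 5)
Your proposal is correct and follows essentially the same route as the paper: it applies Remark \ref{Remark:Spectrum} to $D$ for the disk $\{\lambda : |\lambda| \leq \|D\|\}$ and to $C_b$ (boundedness of $C_b$ coming from Theorem \ref{Theorem:BoundednessCphi}), then transports the latter disk through the identity $\sigma(D) = 1-\sigma(C_b)$ of Theorem \ref{Theorem:Spectrum}. Your write-up is in fact slightly more careful than the paper's, which leaves the affine-map step and the boundedness transfer implicit.
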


\noindent While Theorems \ref{Theorem:Spectrum} and \ref{Corollary:SpectrumBound} do not provide a means of determining the spectrum of $D$ on an arbitrary discrete function space, we can determine the eigenvalues of $D$ precisely.

\begin{theorem}\label{Theorem:Eigenvalues} 
If $D$ is bounded on a discrete function space $\X$ then 
\[\sigma_p(D) = \begin{cases}
\{0\} & \text{if $\X$ contains the constant functions}\\
\hfil \emptyset & \text{otherwise.}
\end{cases}\]
\end{theorem}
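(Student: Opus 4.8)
The plan is to analyze the eigenvalue equation $Df = \lambda f$ pointwise. By the definition of the derivative this equation reads $f(v) - f(b(v)) = \lambda f(v)$ for every $v \in T$, and I would split the argument according to whether $\lambda = 0$ or $\lambda \neq 0$.

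For $\lambda = 0$ the equation is simply $f' \equiv 0$, so by Lemma \ref{Lemma:DerivativeProperties} the solutions are exactly the constant functions. A nonzero such function lies in $\X$ precisely when $\X$ contains the constants (since $\X$ is a vector space, containing one nonzero constant is the same as containing all of them). This produces the eigenvalue $0$ in the first branch of the dichotomy and accounts for its absence in the second.

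For $\lambda \neq 0$ I would first evaluate at the root: since $f'(o) = 0$, the equation gives $\lambda f(o) = 0$ and hence $f(o) = 0$. For $v \in T^*$ the equation rearranges to $(1-\lambda) f(v) = f(b(v))$. If $\lambda \neq 1$, then $f(v) = (1-\lambda)^{-1} f(b(v))$, and starting from $f(o)=0$ an induction on $|v|$ forces $f \equiv 0$. If $\lambda = 1$, the relation instead reads $f(b(v)) = 0$ for all $v \in T^*$; because $T$ has no terminal vertices every vertex has a child and therefore occurs as $b(v)$ for some $v$, so again $f \equiv 0$. In either subcase no nonzero eigenvector exists, so no nonzero $\lambda$ belongs to $\sigma_p(D)$, independently of $\X$.

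Putting the two cases together gives $\sigma_p(D) \subseteq \{0\}$ with $0 \in \sigma_p(D)$ if and only if $\X$ contains the constants, which is exactly the stated dichotomy. The one place demanding care is the degenerate case $\lambda = 1$, where the recursion no longer propagates values downward from the root; there the no-terminal-vertices hypothesis is essential, guaranteeing that every vertex is the image $b(v)$ of some child and hence that the constraints $f(b(v)) = 0$ cover all of $T$ rather than leaving some values undetermined.
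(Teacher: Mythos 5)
Your proposal is correct and follows essentially the same route as the paper: evaluate the eigenvalue equation at the root, split into the cases $\lambda \neq 1$ (where induction on $|v|$ propagates $f \equiv 0$ down from $f(o)=0$) and $\lambda = 1$ (where $f(b(v)) = 0$ for all $v \in T^*$ kills $f$ because, absent terminal vertices, every vertex is $b(v)$ for some child $v$), and then handle $\lambda = 0$ via Lemma \ref{Lemma:DerivativeProperties}. Your explicit remark that the no-terminal-vertices hypothesis is what makes the $\lambda = 1$ constraints cover all of $T$ is a point the paper leaves implicit, but the argument is the same.
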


\begin{proof}
We will first show that the point spectrum of $D$ is a subset of $\{0\}$.  Assume, for purposes of contradiction, there exists $\lambda$ in  $\sigma_p(D)$ such that $\lambda \neq 0$.  We argue in the following two cases.

\begin{case}
Suppose $\lambda = 1$.  Then there exists non-zero function $f$ in $\X$ such that $Df = f$.  Let $w$ be a vertex in $T^*$, and observe \[f(w) - f(b(w)) = f'(w) = (Df)(w) = f(w).\] Thus $f(b(w)) = 0$ for all $w$ in $T^*$, which implies $f$ is identically 0, a contradiction.
\end{case}

\begin{case}
Suppose $\lambda \neq 1$.  Then there exists non-zero function $g$ in $\X$ such that $Dg = \lambda g$.  First observe \[\lambda g(o) = (Dg)(o) = g'(o) = 0.\] Thus $g(o) = 0$.  Next, let $v$ in $T^*$ with $|v|=1$.  Then \[\lambda g(v) = (Dg)(v) = g'(v) = g(v)-g(o) = g(v).\] It follows that $(1-\lambda)g(v) = 0$, and so $g(v) = 0$.  By induction on $|v|$, it follows that $g$ is identically 0, a contradiction.
\end{case}

To complete the proof, note that $0$ is an element of the point spectrum of $D$ if and only if there exists a non-zero function $f$ in $\X$ for which $Df \equiv 0$.  By Lemma \ref{Lemma:DerivativeProperties}, such a non-zero function exists if and only if $f$ is constant.
\end{proof}

\noindent Combining Theorems \ref{Theorem:Spectrum} and \ref{Theorem:Eigenvalues}, we arrive at the following corollary, which will be important in the study of the compact differentiation operators next.

\begin{corollary}\label{Corollary:Spectrum} 
If $D$ is bounded on a discrete function space $\X$ then
\begin{enumerate}
\item 0 is the only possible eigenvalue of $D$.
\item $\sigma(D)$ contains a non-zero element if and only if $\sigma(C_b)$ contains an element other than 1.
\end{enumerate}
\end{corollary}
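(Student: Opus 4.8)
The plan is to deduce both parts directly from the two preceding results, since the corollary simply repackages their content. For part (i), I would invoke Theorem \ref{Theorem:Eigenvalues}, which asserts that $\sigma_p(D)$ equals either $\{0\}$ or $\emptyset$ according to whether $\X$ contains the constant functions. In either case the point spectrum is a subset of $\{0\}$, so $0$ is the only value that can occur as an eigenvalue of $D$. No argument is needed beyond citing the theorem.

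For part (ii), the key tool is Theorem \ref{Theorem:Spectrum}, which supplies the affine identity $\sigma(D) = 1-\sigma(C_b)$. I would observe that the map $\lambda \mapsto 1-\lambda$ is a bijection of $\C$ carrying $\sigma(C_b)$ onto $\sigma(D)$, and that under this map the value $1 \in \sigma(C_b)$ corresponds precisely to $0 \in \sigma(D)$. Consequently, $\sigma(D)$ contains some $\lambda \neq 0$ exactly when $\sigma(C_b)$ contains the point $1-\lambda \neq 1$. Spelling out the two directions of the equivalence is then a one-line verification: if $\lambda \in \sigma(D)$ with $\lambda \neq 0$, then $1-\lambda \in \sigma(C_b)$ and $1-\lambda \neq 1$; conversely, if $\mu \in \sigma(C_b)$ with $\mu \neq 1$, then $1-\mu \in \sigma(D)$ and $1-\mu \neq 0$.

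The only point requiring care is the bookkeeping of which element is fixed under the correspondence: the reflection-and-translation $\lambda \mapsto 1-\lambda$ identifies $0$ with $1$, so the statement that $\sigma(D)$ has a nonzero element must be paired with the statement that $\sigma(C_b)$ has an element different from $1$, rather than different from $0$. I expect no genuine obstacle here, as both assertions are formal consequences of Theorems \ref{Theorem:Eigenvalues} and \ref{Theorem:Spectrum}.
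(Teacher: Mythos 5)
Your proposal is correct and matches the paper exactly: the corollary is stated there as an immediate combination of Theorem \ref{Theorem:Spectrum} and Theorem \ref{Theorem:Eigenvalues}, with part (i) read off from $\sigma_p(D) \subseteq \{0\}$ and part (ii) from the bijection $\lambda \mapsto 1-\lambda$ between $\sigma(C_b)$ and $\sigma(D)$, just as you argue. Your careful note that this map pairs ``nonzero in $\sigma(D)$'' with ``different from $1$ in $\sigma(C_b)$'' is precisely the bookkeeping the paper relies on.
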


\subsection{Compactness}
Finally, we determine conditions for which the differentiation operator on a discrete function space is compact.  Compact operators are of great interest in the study of operators, and the reader is referred to \cite[Chapter 4]{MacCluer:2009} or \cite[Chapters VI and VII]{Conway:1990}.

\begin{definition}[{\cite[Definition 4.5]{MacCluer:2009}}]
\label{Definition:Compact}
If $X$ and $Y$ are Banach spaces and $A:X \to Y$ is linear, we say $A$ is \textit{compact} if whenever $\{x_n\}$ is a bounded sequence in $X$, then $\{Ax_n\}$ has a convergent subsequence in $Y$.
\end{definition}

In the case $\X$ and $\Y$ are discrete function spaces on tree $T$, the following Lemma has been used extensively in characterizing compact multiplication and composition operators on the specific spaces considered in this paper.  This lemma is a modification of the result proved for Banach spaces of analytic functions in \cite[Lemma 2.10]{Tjani:1996}.

\begin{lemma}\label{Lemma:CompactnessLemma}
Let $X$ and $Y$ be Banach spaces of functions on tree $T$.  Suppose that
\begin{enumerate}
\item the point evaluation functionals are bounded,
\item the closed unit ball of $X$ is a compact subset of $X$ in the topology of uniform convergence on compact sets,
\item $A:X \to Y$ is bounded when $X$ and $Y$ are given the topology of uniform convergence on compact sets.
\end{enumerate} Then $A$ is a compact operator if and only if given a bounded sequence $\{f_n\}$ in $X$ such that $\{f_n\}$ converges to 0 pointwise, the sequence $\{Af_n\}$ converges to 0 in the norm of $Y$.
\end{lemma}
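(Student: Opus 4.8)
The plan is to prove the biconditional by treating the two implications separately, following the template of Tjani's original lemma but exploiting a simplification special to trees: since $T$ is a countable, locally finite set with the discrete topology, its compact subsets are exactly its finite subsets, so the topology of uniform convergence on compact sets coincides with the topology of pointwise (equivalently, product) convergence on $\C^T$. I would make this identification explicit at the outset, so that hypotheses (ii) and (iii) can be read as statements about the pointwise topology, and I would use hypothesis (i) repeatedly to pass from $Y$-norm convergence to pointwise convergence.

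For the forward direction, I would assume $A$ is compact, take a bounded sequence $\{f_n\}$ in $X$ with $f_n \to 0$ pointwise, and argue by contradiction that $\|Af_n\|_Y \to 0$. If this failed, there would be $\varepsilon > 0$ and a subsequence with $\|Af_{n_k}\|_Y \geq \varepsilon$; compactness of $A$ then yields a further subsequence along which $\{Af_{n_k}\}$ converges in $Y$-norm to some $g$, and by (i) this convergence is simultaneously pointwise, so $\|g\|_Y \geq \varepsilon > 0$. On the other hand, continuity of $A$ in the pointwise topologies (hypothesis (iii)) applied to $f_n \to 0$ forces $Af_{n_k} \to 0$ pointwise, whence $g \equiv 0$, a contradiction.

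For the reverse direction, I would assume the stated sequential condition and take an arbitrary bounded sequence $\{f_n\}$ in $X$, say $\|f_n\|_X \leq M$, aiming to extract a $Y$-norm convergent subsequence of $\{Af_n\}$. The sequence lies in $M\,\overline{B_X}$, which by (ii) is compact in the pointwise topology; because $T$ is countable, $\C^T$ with the product topology is metrizable, so this compact set is sequentially compact, yielding a subsequence $f_{n_k}$ converging pointwise to some $f$ that again lies in $M\,\overline{B_X} \subseteq X$. Then $g_k := f_{n_k} - f$ is bounded in $X$ (by $2M$) and tends to $0$ pointwise, so the hypothesis gives $\|Af_{n_k} - Af\|_Y = \|Ag_k\|_Y \to 0$; thus $\{Af_{n_k}\}$ converges in $Y$-norm to $Af$, and $A$ is compact.

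I expect the main obstacle to be the two topological bookkeeping points rather than any hard estimate: in the reverse direction one must upgrade the compactness in (ii) to \emph{sequential} compactness (which is exactly what metrizability of the pointwise topology on functions over a countable tree provides) and confirm that the pointwise limit $f$ genuinely lies in $X$, and in the forward direction one must ensure the norm limit and the pointwise limit of the \emph{same} subsequence agree, which is precisely the role of the bounded point evaluations in hypothesis (i).
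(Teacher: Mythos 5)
Your proof is correct, but note that the paper itself does not prove this lemma at all: it is stated with a pointer to Tjani's thesis (\cite[Lemma 2.10]{Tjani:1996}), with only the remark that the tree version is a ``modification'' of the result for Banach spaces of analytic functions. Your argument supplies exactly the missing adaptation, and it follows Tjani's original scheme: forward direction by contradiction, extracting a norm-convergent subsequence of $\{Af_{n_k}\}$, using (i) to see the norm limit is also the pointwise limit and (iii) to see that limit must be $0$; reverse direction by using (ii) to extract a pointwise-convergent subsequence $f_{n_k}\to f$ with $f\in M\,\overline{B_X}\subseteq X$ and applying the hypothesis to $f_{n_k}-f$. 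What is genuinely yours, and correctly identified, is the tree-specific simplification that makes the modification routine: compact subsets of the (discrete, countable, locally finite) vertex set are finite, so uniform convergence on compact sets is pointwise convergence, and the product topology on $\C^T$ is metrizable, which upgrades the compactness in (ii) to the sequential compactness the extraction argument needs --- in Tjani's disk setting the same two facts come instead from metrizability of the compact-open topology on $H(\mathbb{D})$. One small interpretive point: you read ``bounded'' in hypothesis (iii) as continuity between the two compact-open topologies; this is the intended reading (Tjani's statement says continuous), and in any case the two notions coincide here since the topology in question is metrizable, hence bornological. With that reading, every step of your argument goes through, including the uses of linearity of $A$ and the fact that the pointwise limit $f$ lies in $X$ because $M\,\overline{B_X}$ is compact, hence closed, in the pointwise topology.
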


To determine if the differentiation operator is compact, one would need knowledge of the norm on $\X$ to use Definition \ref{Definition:Compact} or Lemma \ref{Lemma:CompactnessLemma}.  Even to show the operator is not compact using either of these results would require the construction of a sequence of functions in $\X$ with particular convergence, or norm, properties. It is for this reason we explore the connection of compactness and the spectrum through the Spectral Theorem.

\begin{theorem}[{\cite[Spectral Theorem for Compact Operators]{Conway:1990}}] 
If $X$ is an infinite-dimensional Banach space and $A$ is a compact operator acting on $X$, then one and only one of the following possibilities occurs:
\begin{enumerate}
\item $\sigma(A) = \{0\}.$
\item $\sigma(A) = \{0,\lambda_1,\dots,\lambda_n\}$, where for each $1 \leq k \leq n$, $\lambda_k$ is an eigenvalue of $A$ and $\ker(A-\lambda_kI)$ is finite-dimensional.
\item $\sigma(A) = \{0, \lambda_1, \lambda_2,\dots\}$, where for each $k \geq 1$, $\lambda_k$ is an eigenvalue of $A$, $\ker(A-\lambda_kI)$ is finite-dimensional, and $\lim_{k \to \infty} \lambda_k = 0$.
\end{enumerate}
\end{theorem}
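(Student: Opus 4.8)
The plan is to establish the trichotomy by combining three structural facts from the Riesz theory of compact operators: that $0$ always lies in the spectrum, that every nonzero point of the spectrum is an eigenvalue with finite-dimensional eigenspace, and that the nonzero eigenvalues can accumulate only at $0$. First I would dispose of $0$: if $0 \notin \sigma(A)$ then $A$ is invertible, so $I = A^{-1}A$ would be compact as the composite of a bounded operator with a compact one; but on an infinite-dimensional space the identity is not compact (its closed unit ball is not relatively compact, by Riesz's lemma), a contradiction. Hence $0 \in \sigma(A)$ in every case, which is why $0$ appears in all three alternatives.

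The main work is to show that each $\lambda \in \sigma(A)$ with $\lambda \neq 0$ is an eigenvalue. Writing $A - \lambda I = -\lambda(I - \lambda^{-1}A)$ with $K := \lambda^{-1}A$ compact, this reduces to the Fredholm alternative for $I - K$: that $I - K$ is injective if and only if it is surjective. I would prove this through the standard Riesz chain arguments — that $\ker(I-K)$ is finite-dimensional (since $K$ restricts to the identity there) and that $\mathrm{ran}(I-K)$ is closed, followed by showing that the ascending chain $\ker((I-K)^n)$ and the descending chain $\mathrm{ran}((I-K)^n)$ both stabilize, from which injectivity forces surjectivity. Consequently, if $A - \lambda I$ fails to be invertible it must already fail to be injective, so $\lambda$ is an eigenvalue; and restricting $A$ to $E_\lambda := \ker(A - \lambda I)$ shows that $\lambda I$ is compact on $E_\lambda$, forcing $\dim E_\lambda < \infty$ since $\lambda \neq 0$.

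Next I would show that the nonzero eigenvalues accumulate only at $0$, i.e. that for each $\varepsilon > 0$ only finitely many eigenvalues satisfy $|\lambda| \geq \varepsilon$. Arguing by contradiction, suppose $\{\lambda_n\}$ are distinct eigenvalues with $|\lambda_n| \geq \varepsilon$ and corresponding eigenvectors $\{x_n\}$, which are then linearly independent. Setting $M_n = \mathrm{span}\{x_1,\dots,x_n\}$, Riesz's lemma provides unit vectors $y_n \in M_n$ with $\mathrm{dist}(y_n, M_{n-1}) \geq 1/2$. Since $(A - \lambda_n I)$ carries $M_n$ into $M_{n-1}$, one has $A y_n = \lambda_n y_n + z_n$ with $z_n \in M_{n-1}$, and the key estimate $\|A y_n - A y_m\| \geq |\lambda_n|\,\mathrm{dist}(y_n, M_{n-1}) \geq \varepsilon/2$ for $n > m$ shows that $\{A y_n\}$ has no convergent subsequence, contradicting the compactness of $A$ applied to the bounded sequence $\{y_n\}$.

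Finally I would assemble the trichotomy. By the preceding steps $\sigma(A)$ consists of $0$ together with nonzero eigenvalues, each of finite multiplicity, and for every $\varepsilon > 0$ only finitely many eigenvalues lie outside the disk $\{|\lambda| < \varepsilon\}$. Hence the set of nonzero eigenvalues is either empty (giving $\sigma(A) = \{0\}$), finite (giving case (ii)), or an infinite sequence $\{\lambda_k\}$ with $\lambda_k \to 0$ (giving case (iii)), and these three possibilities are mutually exclusive, completing the proof. The hard part will be the Fredholm alternative in the second paragraph: establishing that the kernel and range chains stabilize is where compactness is leveraged most delicately and constitutes the technical core, whereas the accumulation estimate and the handling of $0$ are comparatively routine consequences of Riesz's lemma.
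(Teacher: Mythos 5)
The paper does not prove this statement: it is quoted as background directly from \cite{Conway:1990} (the Spectral Theorem for Compact Operators), so there is no in-paper argument to compare against. Your sketch is correct and is essentially the standard Riesz-theory proof found in the cited source --- $0 \in \sigma(A)$ because the identity is not compact on an infinite-dimensional space, the Fredholm alternative (via the stabilizing chains $\ker((I-K)^n)$ and $\mathrm{ran}((I-K)^n)$, plus the bounded inverse theorem) shows every nonzero spectral point is an eigenvalue with finite-dimensional eigenspace, and the Riesz-lemma estimate $\|Ay_n - Ay_m\| \geq \varepsilon/2$ shows the nonzero eigenvalues accumulate only at $0$, from which the trichotomy follows by counting.
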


\noindent We see the Spectral Theorem for Compact Operators provides information about eigenvalues of a compact operator.  An immediate consequence is that if $D$ is compact on an infinite-dimensional discrete function space $\X$ then any non-zero element of $\sigma(D)$ must be an eigenvalue.  Combining this with Corollary \ref{Corollary:Spectrum}, we see that if $\sigma(C_b)$ contains an element other than 1, then $D$ will have a non-zero element of the spectrum that is not an eigenvalue.  In this situation, $D$ cannot be compact.

\begin{corollary} 
Suppose $\X$ is an infinite-dimensional discrete function space.  If $\sigma(C_b)$ contains an element other than 1 then $D$ is not compact on $\X$.
\end{corollary}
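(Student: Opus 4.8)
The plan is to argue by contradiction, synthesizing the Spectral Theorem for Compact Operators with the spectral information already recorded in Corollary \ref{Corollary:Spectrum}. Suppose, toward a contradiction, that $D$ is compact on the infinite-dimensional space $\X$. The crux of the argument is the observation that, in each of the three cases listed in the Spectral Theorem, every non-zero point of $\sigma(D)$ is an eigenvalue of $D$; I will then contradict the fact that $0$ is the only possible eigenvalue of $D$.

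First I would extract a non-zero spectral value from the hypothesis. Since $\sigma(C_b)$ contains an element other than $1$, Corollary \ref{Corollary:Spectrum}(ii) guarantees that $\sigma(D)$ contains a non-zero element; call it $\lambda_0$, so that $\lambda_0 \neq 0$ and $\lambda_0 \in \sigma(D)$.

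Next I would invoke the Spectral Theorem. Because $\X$ is infinite-dimensional and $D$ is assumed compact, $\sigma(D)$ must fall into one of the three listed cases. Case (i), namely $\sigma(D) = \{0\}$, is ruled out immediately since $\sigma(D)$ contains the non-zero value $\lambda_0$. In cases (ii) and (iii), every non-zero element of $\sigma(D)$ is an eigenvalue of $D$; in particular $\lambda_0$ is an eigenvalue. But Corollary \ref{Corollary:Spectrum}(i) asserts that $0$ is the only possible eigenvalue of $D$, which forces $\lambda_0 = 0$, contradicting $\lambda_0 \neq 0$. Hence $D$ cannot be compact.

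I do not anticipate a genuine obstacle, as the proof is a direct combination of the Spectral Theorem with the two parts of Corollary \ref{Corollary:Spectrum}. The only point deserving a moment's care is verifying that the ambient hypotheses are in force: $\X$ is infinite-dimensional by assumption, and $D$ is bounded because any compact operator is bounded, so Corollary \ref{Corollary:Spectrum} indeed applies throughout the argument.
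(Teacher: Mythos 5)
Your proof is correct and follows essentially the same route as the paper: the paper likewise combines the Spectral Theorem for Compact Operators (every non-zero spectral value of a compact operator on an infinite-dimensional space is an eigenvalue) with the two parts of Corollary \ref{Corollary:Spectrum} to conclude that a non-zero, non-eigenvalue element of $\sigma(D)$ rules out compactness. Your explicit check that compactness implies boundedness, so that Corollary \ref{Corollary:Spectrum} applies, is a nice touch the paper leaves implicit.
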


While this corollary does not provide a characterization of compactness, it leads us to consider whether $C_b$ should have an element of its spectrum other than 1.  If $C_b$ is not invertible as an operator on $\X$, then $0$ would be an element of $\sigma(C_b)$. Then 1 would be an element of $\sigma(D)$ by Theorem \ref{Theorem:Spectrum}.  This would show $D$ is not compact on $\X$.

\begin{corollary} 
Suppose $\X$ is an infinite-dimensional discrete function space.  If $C_b$ is not invertible on $X$ then $D$ is not compact on $\X$.
\end{corollary}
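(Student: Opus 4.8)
The plan is to reduce the statement to the preceding corollary by pinning down the single point that non-invertibility of $C_b$ must contribute to its spectrum. The key observation is purely definitional: for a bounded operator, $\lambda \in \sigma(C_b)$ exactly when $C_b - \lambda I$ fails to be invertible. Taking $\lambda = 0$, the hypothesis that $C_b$ is not invertible says precisely that $0 \in \sigma(C_b)$. Here I am tacitly using that $C_b$, and hence $D = I - C_b$, is bounded, so that the spectral language and Theorem \ref{Theorem:Spectrum} are available.

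First I would record this identification and observe that, since $0 \neq 1$, the spectrum $\sigma(C_b)$ contains an element other than $1$, namely $0$ itself. The preceding corollary then applies verbatim to conclude that $D$ is not compact on $\X$. If a self-contained derivation is preferred to a bare citation, the same conclusion can be traced through $D$: by Theorem \ref{Theorem:Spectrum}, $0 \in \sigma(C_b)$ forces $1 = 1 - 0 \in \sigma(D)$, so $1$ is a non-zero point of $\sigma(D)$. Corollary \ref{Corollary:Spectrum} shows that $0$ is the only possible eigenvalue of $D$, whence $1$ is not an eigenvalue. Were $D$ compact on the infinite-dimensional space $\X$, the Spectral Theorem for Compact Operators would force every non-zero element of $\sigma(D)$ to be an eigenvalue; the failure of $1$ to be one therefore rules out compactness.

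I do not anticipate any genuine obstacle. The whole content of the statement is the recognition that \emph{not invertible} is exactly the assertion $0 \in \sigma(C_b)$, a point automatically distinct from $1$; once this is in place the result is an immediate specialization of the preceding corollary. The only point requiring a moment's care is the spectral convention, namely that $0 \in \sigma(C_b)$ is equivalent to $C_b$ failing to be invertible, but this is standard.
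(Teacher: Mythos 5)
Your proposal is correct and matches the paper's own argument: the paper likewise observes that non-invertibility of $C_b$ means $0 \in \sigma(C_b)$, applies Theorem \ref{Theorem:Spectrum} to place $1$ in $\sigma(D)$, and invokes the eigenvalue result together with the Spectral Theorem for Compact Operators (i.e., the preceding corollary) to rule out compactness. Your side remark that compactness of $D$ would itself guarantee boundedness of $D$ and hence of $C_b = I - D$ is a fine way to justify the tacit boundedness assumption.
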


As the composition operators on specific discrete function spaces are well studied, we conclude this section with conditions as to when $C_b$ is not invertible.  First, we show that $C_b$ is injective.  Thus, in order for $C_b$ to be non-invertible, it must be the case that $C_b$ is not surjective.

\begin{proposition} 
The operator $C_b$ is an injection on a discrete function space $\X$.
\end{proposition}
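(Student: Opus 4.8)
The plan is to show that $\ker(C_b) = \{0\}$, which by linearity of $C_b$ is equivalent to injectivity. So I would begin by fixing a function $f$ in $\X$ with $C_b f \equiv 0$ and work toward the conclusion $f \equiv 0$.

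Recall that $C_b$ is the composition operator induced by the backward shift $b$, so $(C_b f)(v) = f(b(v))$ for every $v$ in $T$. Evaluating at the root gives $(C_b f)(o) = f(b(o)) = f(o)$, and since $C_b f \equiv 0$ this forces $f(o) = 0$. For an arbitrary vertex $v$ in $T^*$ we have $b(v) = v^-$, so $(C_b f)(v) = f(v^-)$, and the hypothesis yields $f(v^-) = 0$ for every $v$ in $T^*$.

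The crux of the argument is to convert ``$f$ vanishes on every parent'' into ``$f$ vanishes everywhere,'' and here I would invoke the standing assumption that $T$ has no terminal vertices. Under that assumption every vertex $w$ in $T$ has at least one child, i.e.\ there exists $v$ in $T^*$ with $v^- = w$; equivalently, the parent map $v \mapsto v^-$ is surjective onto all of $T$. Choosing such a child $v$ of an arbitrary $w$ and applying the identity above gives $f(w) = f(v^-) = (C_b f)(v) = 0$. Since $w$ was arbitrary, $f \equiv 0$, establishing $\ker(C_b) = \{0\}$ and hence that $C_b$ is an injection.

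I do not expect a genuine obstacle in this proof; it is a direct kernel computation. The only point requiring care is that the conclusion rests essentially on the absence of terminal vertices: without it, a vertex with no children would never appear as some $v^-$, and one could not rule out $f$ being supported there. This also fits the narrative of the surrounding text, since injectivity of $C_b$ reduces the question of invertibility (and thus of non-compactness of $D$) to the failure of surjectivity, as remarked just before the statement.
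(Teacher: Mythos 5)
Your proof is correct and takes essentially the same approach as the paper: both arguments rest on the observation that, since $T$ has no terminal vertices, every vertex $w$ is $b(v)$ for some child $v$, so $f(w) = (C_b f)(v) = 0$ for any $f$ in $\ker(C_b)$. The only difference is presentational---the paper argues by contradiction from a point where $f(w) \neq 0$, while you run the same computation directly to show the kernel is trivial.
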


\begin{proof}
Assume, for purposes of contradiction, that $C_b$ is not injective as an operator on $X$.  Then there exists a non-zero function $f$ in $\ker(C_b)$.  Let $w$ in $T$ be such that $f(w) \neq 0$.  As $T$ contains no terminal vertices, there exists vertex $v$ in $T$ such that $b(v) = w$.  It then follows that \[f(w) = f(b(v)) = (C_b f)(v) = 0,\] a contradiction.  Thus, $C_b$ is an injection.
\end{proof}

We say that a function $f$ on a tree $T$ is \textit{constant on children} if for every $v$ in $T$, the function $f$ is constant on the set of children of $v$, that is for every $v$ in $T$, there exists constant $C(v)$ such that $f(w) = C(v)$ for all $w$ in $\ch(v)$.  The next result shows that if $C_b$ is surjective on a discrete function space, then every function must be constant on children. 

\begin{lemma}\label{Lemma:CbNotSurjective} Let $\X$ be a discrete function space on tree $T$. If $C_b:\X\to\X$ is surjective, then every function $f$ in $\X$ is constant on children.
\end{lemma}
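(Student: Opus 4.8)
The plan is to show that the range of $C_b$ is automatically contained in the set of functions that are constant on children, and then to conclude by invoking surjectivity. The driving observation is that the value $(C_b f)(v) = f(b(v))$ depends on $v$ only through its image $b(v)$ under the backward shift; in particular, any two vertices sharing the same backward-shift image are assigned the same value by $C_b f$.

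First I would fix an arbitrary function $f$ in $\X$ together with an arbitrary vertex $v$ in $T$, and examine the values of $C_b f$ on the set $\ch(v)$. Every child $w$ in $\ch(v)$ satisfies $w \neq o$ and $w^- = v$, so $b(w) = w^- = v$. Consequently $(C_b f)(w) = f(b(w)) = f(v)$, a quantity that does not depend on the particular child $w$ chosen. Thus $C_b f$ takes the single value $f(v)$ on all of $\ch(v)$, and since $v$ was arbitrary this shows that $C_b f$ is constant on children.

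To finish, I would invoke the surjectivity hypothesis: given any $g$ in $\X$, choose $f$ in $\X$ with $C_b f = g$, whence $g$ is constant on children by the preceding paragraph. As $g$ was arbitrary, every function in $\X$ is constant on children, which is the assertion of the lemma.

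I do not expect a serious obstacle here: the argument is a direct structural consequence of the defining formula for $C_b$, and the only thing to be careful about is the (harmless) degenerate case. In fact the conclusion does not even require the standing no-terminal-vertices hypothesis, since the condition of being constant on $\ch(v)$ is vacuously satisfied whenever $v$ has fewer than two children; the substance of the statement concerns exactly those vertices possessing at least two children, and for those the computation above applies verbatim.
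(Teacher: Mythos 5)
Your proposal is correct and is essentially the paper's own argument: both rest on the computation $(C_b g)(w) = g(b(w)) = g(v)$ for $w \in \ch(v)$ combined with surjectivity, the only difference being that you package this as ``the range of $C_b$ consists of functions constant on children'' before invoking surjectivity, whereas the paper applies it directly to two children $w_1, w_2$ of a fixed $v$. Your closing observation that the no-terminal-vertices hypothesis is not needed here is also accurate.
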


\begin{proof}
Suppose $C_b$ is surjective on $\X$.  Let $f$ be an element of $\X$ and $v$ in $T$.  Suppose $w_1$ and $w_2$ in $\mathrm{ch}(v)$; that is $b(w_1) = v = b(w_2)$.  Since $C_b$ is surjective, there exists function $g$ in $\X$ such that $C_b g = f$.  Observe \[f(w_1) = (C_b g)(w_1) = g(b(w_1)) = g(v) = g(b(w_2)) = (C_b g)(w_1) = f(w_2).\] As $f$ and $v$ were arbitrary, every function of $\X$ is constant on children.
\end{proof}

Since it is reasonable to work toward proving that $D$ is not compact on a discrete function space $\X$, Lemma \ref{Lemma:CbNotSurjective} makes proving this equivalent to constructing a single function rather than a sequence of functions as when using Definition \ref{Definition:Compact} or Lemma \ref{Lemma:CompactnessLemma}.  

Next, we present a function on a class of tree $T$ that is not constant on children, namely the characteristic function on $w$ in $T$ defined by  \[\bigchi_w(v) = \begin{cases}
1 & \text{if $v=w$}\\
0 & \text{otherwise.}
\end{cases}\]  Suppose $T$ is a tree containing a vertex $v$ with at least 2 children, $u$ and $w$.  The function $\bigchi_w$ is not constant on the set $\ch(v)$, since $\bigchi_w(w) = 1$ but $\bigchi_w(u) = 0$.  Thus, on every such tree $T$ there exists a function $f$ that is not constant on children.

\begin{corollary}\label{Corollary:CharacteristicFunctionDNotCompact} Suppose $T$ is a tree containing a vertex $v$ with at least 2 children.  If $\X$ is an infinite-dimensional discrete function space on $T$ that contains $\bigchi_w$ for some $w$ in $\ch(v)$, then $D$ is not compact on $\X$.
\end{corollary}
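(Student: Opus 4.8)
The plan is to deduce non-compactness of $D$ from the structural machinery already assembled, rather than building any sequences by hand, exactly as the discussion preceding Lemma~\ref{Lemma:CbNotSurjective} advertises. The whole argument collapses to producing a single element of $\X$ that fails to be constant on children, and the hypothesis supplies such a function for free.

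First I would single out the extra child guaranteed by the hypothesis: since $v$ has at least two children, pick $u$ in $\ch(v)$ with $u \neq w$. Evaluating the characteristic function then gives $\bigchi_w(w) = 1$ whereas $\bigchi_w(u) = 0$, so $\bigchi_w$ is not constant on the set $\ch(v)$. Hence $\bigchi_w$, which lies in $\X$ by assumption, is a function on $T$ that is not constant on children.

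Next I would apply the contrapositive of Lemma~\ref{Lemma:CbNotSurjective}: because $\X$ contains a function that is not constant on children, $C_b$ cannot be surjective on $\X$. A non-surjective operator is in particular not a bijection, so $C_b$ is not invertible on $\X$; the injectivity of $C_b$ established earlier is precisely what reduced non-invertibility to this single failure of surjectivity. Finally, since $\X$ is infinite-dimensional, the earlier corollary stating that non-invertibility of $C_b$ forces $D$ to be non-compact applies directly and yields the conclusion.

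I do not expect a genuine obstacle in this step. The conceptual content was front-loaded into Lemma~\ref{Lemma:CbNotSurjective} together with the spectral corollaries, which between them replace the usual sequence-construction criteria of Definition~\ref{Definition:Compact} and Lemma~\ref{Lemma:CompactnessLemma} by the far lighter task of exhibiting one offending function. The only thing left to verify is the trivial evaluation showing that $\bigchi_w$ separates two children of $v$, after which the chain ``not constant on children $\Rightarrow$ $C_b$ not surjective $\Rightarrow$ $C_b$ not invertible $\Rightarrow$ $D$ not compact'' closes at once.
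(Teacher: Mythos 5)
Your proposal is correct and matches the paper's intended argument exactly: the paper establishes in the paragraph preceding the corollary that $\bigchi_w$ fails to be constant on $\ch(v)$, then chains the contrapositive of Lemma~\ref{Lemma:CbNotSurjective} with the non-invertibility corollary, precisely as you do. Nothing to add.
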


We define a \textit{path tree} to be a tree for which every vertex has exactly one child.  Thus, Corolllary \ref{Corollary:CharacteristicFunctionDNotCompact} applies to an infinite discrete function space $\X(T)$ for which $T$ is not a path.  Recalling that the Hardy space $\HardypT$ is defined on a homogeneous tree of degree at least 2, any such tree on which the Hardy space is defined cannot be a path.  However, this is not necessarily the case for the Lipschitz space or the weighted Banach spaces.

\section{The Lipschitz Space}\label{Section:LipschitzSpace}
In this section, we study the differentiation operator acting on the Lipschitz space $\LipT$.  We utilize the results in Section \ref{Section:Operator}, determining that $D$ is neither an isometry nor compact on $\LipT$.  In addition, we determine the spectrum of $D$.  

Recall the definition of the Lipschitz space in Definition \ref{Definition:SpecificSpaces}.  It was proven in \cite{ColonnaEasley:2010} that the Lipschitz space is a functional Banach space under the norm \[\|f\|_\Lip = |f(o)| + \sup_{v \in T^*}\;|f'(v)|\] and point-evaluation bounds \[|f(v)| \leq |f(o)| + |v|\sup_{v \in T^*}\;|f'(v)|\] (see Theorem 2.1 and Lemma 3.4, respectively).  Also proven is the Lipschitz space is infinite-dimensional, as it contains a separable subspace (see Theorem 2.3).  Lastly, a direct calculation shows that the Lipschitz space contains the characteristic functions $\bigchi_w$ for all $w$ in $T$ with 
\[\|\bigchi_w\|_\Lip = \begin{cases}
2 & \text{if $w = o$}\\
1 & \text{otherwise}.
\end{cases}\]

As was observed in the previous section, we can approach determining if the differentiation operator $D$ is bounded on the Lipschitz space, either appealing to Theorem \ref{Theorem:BoundednessCGT} or Theorem \ref{Theorem:BoundednessCphi}.  As the norm on $\Lip$ involves the derivative already, it might be easier to determine when $C_b$ is bounded on $\Lip$.  

\begin{theorem}\label{Theorem:BoundednessLip} 
The operator $D$ is bounded on the Lipschitz space $\LipT$.
\end{theorem}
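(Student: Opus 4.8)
The plan is to invoke Theorem \ref{Theorem:BoundednessCphi}, which reduces the boundedness of $D$ on $\LipT$ to that of the composition operator $C_b$. Since the Lipschitz norm $\|f\|_\Lip = |f(o)| + \sup_{v \in T^*}|f'(v)|$ is built directly out of the derivative, it should be far more convenient to estimate $\|C_b f\|_\Lip$ than to wrestle with $D$ itself. So the first step is to compute the derivative of $C_b f$ at an arbitrary vertex and exploit the structure of the norm.

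First I would fix $f \in \Lip$ and $v \in T^*$ and unwind the definitions:
\[(C_b f)'(v) = (C_b f)(v) - (C_b f)(b(v)) = f(b(v)) - f\bigl(b(b(v))\bigr).\]
The key observation is that this equals $f'(b(v))$ in every case. When $|v| \geq 2$ it is immediate, since then $b(v) = v^- \neq o$. When $|v| = 1$ one has $b(v) = o$, so both $f'(b(v)) = f'(o)$ and $(C_b f)'(v) = f(o) - f(o)$ vanish, and the identity persists. This uniform formula $(C_b f)'(v) = f'(b(v))$ is the heart of the argument. Assembling the norm, the anchor term is $|(C_b f)(o)| = |f(b(o))| = |f(o)|$, and $\sup_{v \in T^*}|(C_b f)'(v)| = \sup_{v \in T^*}|f'(b(v))|$. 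Because $T$ has no terminal vertices, every vertex $w$ has a child, so every $w \in T$ arises as $w = b(v)$ for some $v \in \ch(w) \subseteq T^*$; thus $\{b(v) : v \in T^*\} = T$. Since $f'(o) = 0$ contributes nothing to the supremum, this equals $\sup_{w \in T^*}|f'(w)|$, and hence $\|C_b f\|_\Lip = \|f\|_\Lip$. In particular $C_b$ is an isometry on $\Lip$, so it is bounded, and Theorem \ref{Theorem:BoundednessCphi} yields that $D$ is bounded.

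I do not anticipate a genuine obstacle, as the computation is short; the two points requiring care are the degenerate case $|v| = 1$, where $b(v) = o$ identifies two vertices, and the surjectivity of $b$ onto all of $T$, which is precisely where the standing no-terminal-vertex hypothesis enters. As an alternative one could instead verify directly that $D$ maps $\Lip$ into $\Lip$ and appeal to Theorem \ref{Theorem:BoundednessCGT}, bounding $|(f')'(v)| \leq 2\sup_{w \in T^*}|f'(w)|$; but the route through $C_b$ is cleaner and has the added benefit of recording $\|C_b\| = 1$, a fact that will feed into the subsequent determination of $\|D\|$ and the verification of Conjecture \ref{Conjecture:OperatorNorm} for the Lipschitz space.
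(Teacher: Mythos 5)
Your proof is correct, and while it shares the paper's overall reduction --- both you and the authors invoke Theorem \ref{Theorem:BoundednessCphi} to pass from $D$ to $C_b$ --- the way you establish boundedness of $C_b$ is genuinely different. The paper cites the external characterization \cite[Theorem 3.2]{AllenColonnaEasley:2014}, checking that $\lambda_b = \sup_{v \in T^*} \metric(b(v),b(v^-)) = 1 < \infty$, whereas you prove from scratch that $C_b$ is an isometry via the identity $(C_b f)'(v) = f'(b(v))$ for all $v \in T^*$. Your computation is sound: the degenerate case $|v|=1$ is handled correctly (both sides vanish since $f'(o)=0$ by definition), and your use of the no-terminal-vertex hypothesis to get surjectivity of $b$ on $T^*$ --- every $w \in T$ has a child in $T^*$, so $\{b(v) : v \in T^*\} = T$ --- is exactly where that standing assumption is needed, after which $|f'(o)|=0$ lets you shrink the index set back to $T^*$ and conclude $\|C_b f\|_\Lip = \|f\|_\Lip$. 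What your route buys is self-containedness and more: the paper needs the isometry property of $C_b$ again in its spectrum argument, where it cites \cite[Theorem 5.1]{AllenColonnaEasley:2014}, so your direct computation re-proves both cited facts at once for this particular symbol $b$, and it records $\|C_b\|=1$ (which the paper extracts from the citation) as an immediate byproduct feeding into the later proof that $\|D\|=2$. What the paper's route buys is brevity and a template that generalizes to arbitrary self-maps $\varphi$ rather than just the backward shift. Your alternative fallback via Theorem \ref{Theorem:BoundednessCGT} with the bound $|f''(v)| \leq 2\sup_{w \in T^*}|f'(w)|$ would also work, and is consistent with the second-derivative computation the paper performs later for the norm.
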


\begin{proof}
To show $D$ is bounded on $\LipT$ it suffices to show $C_b$ is bounded on $\LipT$ by Theorem \ref{Theorem:BoundednessCphi}.  By \cite[Theorem 3.2]{AllenColonnaEasley:2014}, $C_b$ is bounded on $\LipT$ if and only if \[\lambda_b = \sup_{v \in T^*} \metric(b(v),b(v^-)) < \infty.\]  Observe that 
\[\sup_{v \in T^*} \metric(b(v),b(v^-)) = \sup_{v \in T^*}\metric(b(v),b(b(v))) = \sup_{w \in T}\metric(w,b(w)).\]
If $w = o$, then $\metric(o,b(o)) = \metric(o,o) = 0$.  Otherwise, if $w$ in $T^*$, then $\metric(w,b(w)) = 1$.  So $\lambda_b = 1$. Thus $C_b$, and moreover $D$, is bounded on $\LipT$.
\end{proof}

\noindent Note that \cite[Theorem 3.2]{AllenColonnaEasley:2014} also establishes the norm of $C_b$, that is $\|C_b\| = \lambda_b = 1$. Thus, we can establish the following estimates on the norm of $D$ from Corollary \ref{Corollary:OperatorNormBounds}, namely \begin{equation}\label{Inequality:NormBounds}0 \leq \|D\| \leq 2.\end{equation} While the lower bound from Corollary \ref{Corollary:OperatorNormBounds} does not provide any information, the upper bound provides a reason to conjecture that $\|D\| = 2$, which would lend further evidence to support Conjecture \ref{Conjecture:OperatorNorm}. 

\begin{theorem}\label{Theorem:NormLipschitz}
The operator $D$ on the Lipschitz space $\LipT$ has norm 2.
\end{theorem}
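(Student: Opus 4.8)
The plan is to establish $\|D\| = 2$ by proving the two inequalities $\|D\| \le 2$ and $\|D\| \ge 2$ separately. The upper bound demands no further work: it is precisely inequality \eqref{Inequality:NormBounds}, which comes from Corollary \ref{Corollary:OperatorNormBounds} together with the fact, recorded after Theorem \ref{Theorem:BoundednessLip}, that $\|C_b\| = 1$ on $\LipT$. So the whole task collapses to exhibiting test functions witnessing $\|D\| \ge 2$.

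For the lower bound, my aim is to produce a single unit-norm function $f \in \LipT$ with $\|Df\|_\Lip = 2$, which already forces $\|D\| \ge \|Df\|_\Lip / \|f\|_\Lip = 2$. The guiding observation is that since $(Df)(o) = f'(o) = 0$, we have $\|Df\|_\Lip = \sup_{v \in T^*}|(f')'(v)|$; that is, the norm of $Df$ measures the largest \emph{second difference} of $f$. A function whose first difference $f'$ jumps from $+1$ to $-1$ across a single edge will carry a second difference of magnitude $2$ while keeping $\sup_{v \in T^*}|f'(v)| = 1$, and this is exactly the behaviour of a characteristic function. I would therefore take $f = \bigchi_w$ for a vertex $w \in T^*$; such $w$ exist because $T$ has no terminal vertices (so $o$ has children, making $T^*$ nonempty), and the same hypothesis guarantees $\ch(w) \ne \emptyset$. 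The excerpt already records $\|\bigchi_w\|_\Lip = 1$ for $w \ne o$, so $f$ has unit norm.

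The concrete verification I would carry out is a short direct computation of $f' = (\bigchi_w)'$, which equals $1$ at $w$, equals $-1$ on each child in $\ch(w)$, and vanishes at every other vertex; this confirms $\sup_{v \in T^*}|f'(v)| = 1$ and $\|f\|_\Lip = 1$. Applying the difference operator once more, the value $(f')'(v)$ at any $v \in \ch(w)$ equals $f'(v) - f'(w) = -1 - 1 = -2$, so $\|Df\|_\Lip \ge 2$; combined with the upper bound this in fact shows the supremum is attained and $\|Df\|_\Lip = 2$. Hence $\|D\| = 2$. I do not anticipate any genuine obstacle: the only point requiring care is the bookkeeping of the backward shift near $w$ and its children, where one must check that $\sup_{v}|f'(v)|$ stays equal to $1$ while the second difference reaches $2$. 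Notably, this single test function realizes the norm exactly, so no limiting sequence is needed.
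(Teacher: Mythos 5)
Your proposal is correct and follows essentially the same route as the paper: both establish the upper bound via Corollary \ref{Corollary:OperatorNormBounds} with $\|C_b\|=1$, and both witness the lower bound with the characteristic function $\bigchi_w$, $w \in T^*$, whose second difference attains $-2$ at the children of $w$. Your computation of $(\bigchi_w)'$ and $(\bigchi_w)''$ matches the paper's (you only record the value $-2$ at $\ch(w)$, which indeed suffices), so there is nothing to add.
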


\begin{proof}
From the bound established in \eqref{Inequality:NormBounds}, it suffices to show $\|D\| \geq 2$. This is done by constructing a function $f$ in $\LipT$ with $\|f\|_\Lip \leq 1$ and $\|Df\|_\Lip = 2$.  For a fixed vertex $w$ in $T^*$, $\|\bigchi_{w}\|_\Lip = 1$.  First observe 
\[\begin{aligned}
\bigchi''_{w}(v) &= \bigchi'_w(v) - \bigchi'_w(b(v))\\
&= \bigchi_w(v) - \bigchi_w(b(v)) - \left(\bigchi_w(b(v)) - \bigchi_w(b(b(v)))\right)\\
&= \bigchi_{w}(v) - 2\bigchi_{w}(b(v)) + \bigchi_{w}(b(b(v)))\\ 
&= \begin{cases}
\hfil 1 & \text{if $v = w$ or $b(b(v)) = w$}\\
\hfil -2 & \text{if $b(v) = w$}\\
\hfil 0 & \text{otherwise.}
\end{cases}
\end{aligned}\]
So
\[\|D\| \geq \|D\bigchi_w\|_\Lip = \|\bigchi'_w\|_\Lip
= \sup_{v \in T^*}\;|\bigchi''_w(v)|  = 2.
\] Thus $\|D\| = 2$, as desired.
\end{proof}

Since the Lipschitz space contains the constant functions, $D$ is not an isometry on $\LipT$ by Theorem \ref{Theorem:IsometriesWithConstants}.  Additionally, this follows from the fact that $\|D\| \neq 1$.

\begin{corollary} 
The operator $D$ is not an isometry on the Lipschitz space $\LipT$.
\end{corollary}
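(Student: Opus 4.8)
The plan is to observe that this corollary is an immediate consequence of the results already established, via either of two independent one-line arguments, so no new construction is required. I would record the first as the proof proper and mention the second as an even quicker route, exactly as the surrounding exposition suggests.

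The most direct approach is to invoke Theorem \ref{Theorem:IsometriesWithConstants}. All that must be checked is that $\LipT$ contains the constant functions. Given a constant function $f \equiv c$, Lemma \ref{Lemma:DerivativeProperties} gives $f' \equiv 0$, so $\sup_{v \in T^*}|f'(v)| = 0 < \infty$ and hence $f \in \LipT$. With the constants lying in the space, Theorem \ref{Theorem:IsometriesWithConstants} applies verbatim: $D$ annihilates the nonzero constant functions, so its kernel is nontrivial, it fails to be injective, and therefore it cannot be an isometry.

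Alternatively, one can argue purely from the operator norm. Any isometry $A$ on a nontrivial normed space satisfies $\|A\| = 1$, since $\|Af\| = \|f\|$ for every $f$ forces $\|Af\|/\|f\| = 1$ for all nonzero $f$. By Theorem \ref{Theorem:NormLipschitz}, however, $\|D\| = 2 \neq 1$, so $D$ cannot be an isometry on $\LipT$. There is no genuine obstacle to overcome here, as the full content of the statement is already packaged in Theorems \ref{Theorem:IsometriesWithConstants} and \ref{Theorem:NormLipschitz}; the only task is to note that $\LipT$ is a nontrivial space containing the constants, both of which are immediate.
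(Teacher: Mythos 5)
Your proposal is correct and matches the paper's own argument exactly: the paper likewise notes that $\LipT$ contains the constant functions so Theorem \ref{Theorem:IsometriesWithConstants} applies, and also remarks that the conclusion follows alternatively from $\|D\| = 2 \neq 1$. Both of your routes are precisely the ones the paper records, so there is nothing to add.
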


In the case of the Lipschitz space, we can completely determine the spectrum of $D$ through understanding the spectrum of $C_b$.  For the following theorem, we denote the closed disk in $\C$ of radius $r$ centered at $z_0$ by $\overline{D(z_0,r)}$, and will utilize the following results that we include for completeness.

\begin{theorem*}[{\cite[Theorem 5.1]{AllenColonnaEasley:2014}}] Let $\varphi$ be a non-constant function from a tree $T$ into itself. Then
the composition operator $C_\varphi$ is an isometry on $\LipT$ if and only if
\begin{enumerate}
\item $\varphi(o) = o$;
\item $\varphi(v)$ and $\varphi(w)$ are neighbors or $\varphi(v) = \varphi(w)$ whenever $v$ and $w$ are neighbors; and
\item $\varphi$ is onto.
\end{enumerate}
\end{theorem*}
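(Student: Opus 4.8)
The plan is to work directly from the explicit formula for the Lipschitz norm. Writing $L(f) = \sup_{v \in T^*}|f'(v)|$ for the seminorm part, so that $\|f\|_\Lip = |f(o)| + L(f)$, and using $(C_\varphi f)(v) = f(\varphi(v))$, I would first record the identity
\[\|C_\varphi f\|_\Lip = |f(\varphi(o))| + \sup_{v \in T^*}\,|f(\varphi(v)) - f(\varphi(v^-))|.\]
Since $L(f)$ is precisely the Lipschitz constant of $f$ for the tree metric (each edge value $|f(a)-f(a^-)|$ is at most $L(f)$, and telescoping along a geodesic gives $|f(x)-f(y)| \le \metric(x,y)\,L(f)$), the problem reduces to measuring how $\varphi$ distorts the value at $o$ and the edge-differences. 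I would then prove the two implications separately.

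For necessity, assume $C_\varphi$ is an isometry and test against the characteristic functions. For (i): since $\|\bigchi_o\|_\Lip = 2$ while $C_\varphi\bigchi_o = \bigchi_{\varphi^{-1}(o)}$ takes only the values $0$ and $1$ and so has seminorm at most $1$, a choice $\varphi(o)\ne o$ would also give $(C_\varphi\bigchi_o)(o) = \bigchi_o(\varphi(o)) = 0$, whence $\|C_\varphi\bigchi_o\|_\Lip \le 1 < 2$, a contradiction; so $\varphi(o)=o$. For (iii): if some $p$ were omitted from $\varphi(T)$, then $C_\varphi\bigchi_p \equiv 0$ while $\|\bigchi_p\|_\Lip \ge 1$, contradicting isometry; so $\varphi$ is onto. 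For (ii): if neighbors $v\sim w$ satisfied $\metric(\varphi(v),\varphi(w)) = d \ge 2$, I would take the function $g(x) = \metric(\varphi(v),x) - \metric(\varphi(v),o)$, which lies in $\LipT$ with $g(o)=0$ and $L(g)=1$, hence $\|g\|_\Lip = 1$; then $|g(\varphi(v)) - g(\varphi(w))| = d$, so $L(C_\varphi g) \ge d \ge 2$ and $\|C_\varphi g\|_\Lip > \|g\|_\Lip$, the final contradiction. Thus all three conditions hold.

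For sufficiency, assume (i)--(iii). The root term matches at once by (i). For the seminorm, (ii) gives the easy bound $L(C_\varphi f) \le L(f)$, since every edge $[v,v^-]$ is sent to a single vertex or an edge, making $|f(\varphi(v)) - f(\varphi(v^-))| \le L(f)$. The reverse inequality is the crux, and this is where the tree structure is used. I would establish an edge-surjectivity lemma: for each $a \in T^*$ there is an edge $[x,x^-]$ with $\{\varphi(x),\varphi(x^-)\} = \{a,a^-\}$. To prove it, use (iii) to pick $v$ with $\varphi(v)=a$ and apply $\varphi$ to the geodesic $o=v_0,v_1,\dots,v_n=v$; by (ii) the images $\varphi(v_0),\dots,\varphi(v_n)$ form a walk from $\varphi(o)=o$ to $a$ whose consecutive terms are equal or adjacent. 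Since $o\notin S_a$ (because $a\ne o$) while $a\in S_a$, and the only edge joining $S_a$ to its complement is $[a^-,a]$, the walk must first enter $S_a$ across exactly that edge; so some consecutive pair has $\varphi(v_i)=a^-$ and $\varphi(v_{i+1})=a$, and $[v_{i+1},v_{i+1}^-]$ is the required edge. Hence $|f(a)-f(a^-)| \le L(C_\varphi f)$ for every $a\in T^*$, giving $L(f) \le L(C_\varphi f)$ and therefore equality, so $C_\varphi$ is an isometry.

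The main obstacle is precisely this edge-surjectivity step: surjectivity of $\varphi$ only guarantees that vertices are attained, and it must be upgraded to attaining each edge with the correct orientation. The sector-and-connectivity argument above---that in a tree any walk reaching $a$ from outside $S_a$ must cross the unique separating edge $[a^-,a]$---is what drives the passage from vertex-surjectivity to edge-surjectivity, and it is the one place where the simple-connectedness of $T$ is genuinely essential.
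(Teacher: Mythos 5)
The paper never proves this statement: it is imported verbatim from \cite[Theorem 5.1]{AllenColonnaEasley:2014} and used as a black box to compute $\sigma(C_b)$, so there is no internal proof to compare yours against. Judged on its own, your argument is correct and complete. Writing $L(f)=\sup_{v\in T^*}|f'(v)|$, your necessity tests all lie in $\LipT$ with the norms you claim: $\|\bigchi_o\|_\Lip=2$ versus $\|C_\varphi\bigchi_o\|_\Lip\leq 1$ when $\varphi(o)\neq o$ (and note $\varphi^{-1}(o)$ may be empty, which only helps); $C_\varphi\bigchi_p\equiv 0$ for an omitted $p$; and for (ii) the recentred distance function $g(x)=\metric(\varphi(v),x)-\metric(\varphi(v),o)$ has $L(g)=1$ because a tree is bipartite, so distance to a fixed vertex changes by exactly one along every edge, while the crossing edge between the neighbors $v$ and $w$ itself witnesses $L(C_\varphi g)\geq d\geq 2$. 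The sufficiency direction is where the real content lies, and you isolate it correctly: (ii) alone gives the contraction $L(C_\varphi f)\leq L(f)$, and your edge-surjectivity lemma is exactly what upgrades vertex-surjectivity (iii) to the reverse inequality. The sector argument is sound: $[a^-,a]$ is the unique edge joining $S_a$ to its complement (if $[u,u^-]$ crosses with $u\in S_a$, then $a$ on the root-to-$u$ path but not the root-to-$u^-$ path forces $u=a$), the $\varphi$-image of the root-to-$v$ geodesic is a walk from $o\notin S_a$ to $a\in S_a$ whose consecutive terms are equal or adjacent, and at the first entry into $S_a$ the pair must be $(a^-,a)$ in that order. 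Since consecutive vertices on a geodesic from the root satisfy $v_i=v_{i+1}^-$, you even obtain the correct orientation, so $(C_\varphi f)'(v_{i+1})=f'(a)$ exactly, giving $L(f)\leq L(C_\varphi f)$ and hence equality. One cosmetic remark: your proof never uses the hypothesis that $\varphi$ is non-constant; that is harmless, since a constant map onto a vertex of an infinite tree cannot satisfy (iii), so the equivalence holds vacuously in that case as well.
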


\begin{theorem*}[{\cite[Theorem 6.2]{AllenColonnaEasley:2014}}] Let $C_\varphi$ be an isometry on $\LipT$.
\begin{enumerate}
\item If $\varphi$ is not an automorphism of $T$, then $\sigma(C_\varphi) = \overline{D}$.
\item If $\varphi$ is an automorphism of $T$, then the spectrum of $C_\varphi$ is contained in
the unit circle and the point spectrum is nonempty.
\end{enumerate}
\end{theorem*}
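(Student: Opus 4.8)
The plan is to deduce both parts from a single dichotomy valid for any norm-one isometry on a Banach space: its spectrum either fills the closed unit disk or is contained in the unit circle, and the two alternatives are distinguished solely by invertibility. I would then identify invertibility of $C_\varphi$ with $\varphi$ being an automorphism. Throughout, $\D$ denotes the open unit disk and $\overline{\D}$ its closure, the closed unit disk written $\overline{D}$ in the statement.

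First, since $C_\varphi$ is an isometry we have $\|C_\varphi\| = 1$, so the spectral-radius bound gives $\sigma(C_\varphi) \subseteq \overline{\D}$. The structural heart of the argument is that $\sigma(C_\varphi)$ meets $\D$ in a relatively clopen set. Indeed, for $|\lambda| < 1$ and any $f \in \LipT$,
\[
\|(C_\varphi - \lambda I)f\|_\Lip \ge \|C_\varphi f\|_\Lip - |\lambda|\,\|f\|_\Lip = (1-|\lambda|)\,\|f\|_\Lip,
\]
so $C_\varphi - \lambda I$ is bounded below and $\lambda$ lies outside the approximate point spectrum $\sigma_{ap}(C_\varphi)$. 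As the topological boundary of the spectrum is always contained in $\sigma_{ap}$, we obtain $\partial\sigma(C_\varphi) \subseteq \T$. Hence $\sigma(C_\varphi)\cap\D$ is closed in $\D$ (because $\sigma$ is closed) and open in $\D$ (because no boundary point of $\sigma$ lies in $\D$); connectedness of $\D$ forces $\sigma(C_\varphi)\cap\D$ to be $\emptyset$ or all of $\D$. Passing to closures, this is exactly the dichotomy $\sigma(C_\varphi)=\overline{\D}$ or $\sigma(C_\varphi)\subseteq\T$.

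It then remains to resolve the alternative by testing the single point $0\in\D$, i.e. by deciding invertibility of $C_\varphi$. If $\varphi$ is an automorphism, then $\varphi^{-1}$ again fixes $o$, preserves neighbors, and is onto, so $C_{\varphi^{-1}}$ is a bounded isometry by the cited Theorem 5.1, and $C_\varphi C_{\varphi^{-1}} = C_{\varphi^{-1}\circ\varphi} = I = C_{\varphi^{-1}}C_\varphi$; thus $C_\varphi$ is invertible, $0\notin\sigma(C_\varphi)$, and the dichotomy yields $\sigma(C_\varphi)\subseteq\T$, giving (2). Conversely, if $\varphi$ is onto but not an automorphism, I claim $\varphi$ is not injective: were it injective it would be a bijection, and the neighbor-preserving condition of Theorem 5.1 would send each edge to an edge; a short geodesic argument (any backtrack in the image of a geodesic would force $\varphi(u_{i-1})=\varphi(u_{i+1})$ with $u_{i-1}\ne u_{i+1}$, contradicting injectivity) shows such a root-fixing bijection is distance-preserving, hence an automorphism — a contradiction. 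With $\varphi$ non-injective, $C_\varphi$ fails to be surjective, since every function in its range is constant on the fibres of $\varphi$ and that is a proper subspace of $\LipT$; meanwhile $C_\varphi$ is injective because $\varphi$ is onto. So $C_\varphi$ is not invertible, $0\in\sigma(C_\varphi)\cap\D$, and the dichotomy gives $\sigma(C_\varphi)=\overline{\D}$, proving (1).

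The remaining assertion in (2), that $\sigma_p(C_\varphi)\ne\emptyset$, is immediate: the constant function $\mathbf 1$ lies in $\LipT$ and $C_\varphi\mathbf 1=\mathbf 1$, so $1\in\sigma_p(C_\varphi)$. I expect the main obstacle to be the implication \emph{onto but not an automorphism $\Rightarrow$ not injective}, which is what pins $0$ inside the spectrum; the remaining ingredients—the bounded-below estimate, the clopen/connectedness dichotomy, and the inverse computation for automorphisms—are routine. I would also take care that the two general operator-theoretic facts invoked, namely $\partial\sigma(A)\subseteq\sigma_{ap}(A)$ and ``bounded below $\Rightarrow$ closed range,'' are cited in their Banach-space form rather than only for Hilbert spaces.
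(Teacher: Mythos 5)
This statement is imported into the paper verbatim from \cite[Theorem 6.2]{AllenColonnaEasley:2014} and stated without proof (it is quoted only ``for completeness''), so there is no internal proof to compare against; judged on its own merits, your argument is correct, and it is a sound self-contained reconstruction along the classical lines for isometries on Banach spaces. The operator-theoretic half is right: the bounded-below estimate $\|(C_\varphi-\lambda I)f\|_\Lip \geq (1-|\lambda|)\|f\|_\Lip$ for $|\lambda|<1$, the fact that $\partial\sigma(A)\subseteq\sigma_{\mathrm{ap}}(A)$ (which does hold in the Banach-space setting, as you were careful to note), and the clopen/connectedness argument in $\D$ together give the dichotomy ``$\sigma(C_\varphi)=\overline{\D}$ or $\sigma(C_\varphi)\subseteq\T$,'' resolved exactly by invertibility of $C_\varphi$; your computation $C_\varphi C_{\varphi^{-1}} = C_{\varphi^{-1}\circ\varphi} = I$ correctly settles the automorphism case. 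You also correctly identified the genuinely tree-theoretic step, namely that a root-fixing, onto, neighbor-preserving, \emph{injective} self-map of $T$ is forced to be an automorphism: your non-backtracking argument works precisely because in a tree a non-backtracking walk is a geodesic, so an injective edge-preserving map is distance-preserving, and a distance-preserving bijection reflects adjacency. Two small points you should make explicit to close the write-up: (a) properness of the range of $C_\varphi$ when $\varphi$ is not injective needs a witness function, and $\bigchi_v \in \LipT$ (the paper records $\|\bigchi_v\|_\Lip \in \{1,2\}$) separates two vertices $v \neq w$ in a common fibre, since every function in the range of $C_\varphi$ is constant on fibres; (b) the cited Theorem 5.1 is stated for non-constant $\varphi$, so you should note that a constant symbol can never induce an isometry (it annihilates characteristic functions of vertices off its image), making that hypothesis harmless. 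Your final observation that $1 \in \sigma_p(C_\varphi)$ via the constant function is correct, as constants lie in $\LipT$ with norm $|f(o)|$.
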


\begin{theorem} 
On the Lipschitz space $\LipT$ the spectrum of $D$ is $\overline{D(1,1)}$.
\end{theorem}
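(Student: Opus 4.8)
The plan is to reduce the computation of $\sigma(D)$ to that of $\sigma(C_b)$ by way of Theorem \ref{Theorem:Spectrum}, which gives $\sigma(D) = 1 - \sigma(C_b)$, and then to pin down $\sigma(C_b)$ using the two cited results of Allen, Colonna, and Easley on isometric composition operators on $\LipT$. The observation that unlocks both cited theorems is that the backward shift $b$ induces an isometry $C_b$ on $\LipT$ that fails to be an automorphism of $T$; once this is established, \cite[Theorem 6.2]{AllenColonnaEasley:2014} immediately yields $\sigma(C_b) = \overline{D(0,1)}$, the closed unit disk.

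First I would verify that $C_b$ is an isometry by checking the three conditions of \cite[Theorem 5.1]{AllenColonnaEasley:2014} for $\varphi = b$. Condition (i), $b(o) = o$, holds by definition, and $b$ is non-constant since it attains values at distance $1$ from $o$, so the hypotheses are genuinely met. For condition (iii), surjectivity, I would note that since $T$ has no terminal vertices every vertex $u$ has a child $v$, whence $b(v) = u$, while $b(o) = o$ accounts for the root; thus $b$ is onto. Condition (ii) requires that neighbors be sent to neighbors or to a single vertex: if $v \sim w$ with, say, $w = v^-$, then $b(v) = w$ while $b(w) = w^-$ when $w \neq o$ and $b(w) = o$ when $w = o$, and in either case $b(v)$ and $b(w)$ are neighbors or equal.

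Next I would argue that $b$ is not an automorphism of $T$. The root $o$ has no parent, so all of its neighbors are children; since $o$ is not terminal it has at least two children $u_1$ and $u_2$, and then $b(u_1) = o = b(u_2)$, so $b$ is not injective and hence not an automorphism. Applying case (i) of \cite[Theorem 6.2]{AllenColonnaEasley:2014} therefore gives $\sigma(C_b) = \overline{D(0,1)}$. Substituting into Theorem \ref{Theorem:Spectrum} yields $\sigma(D) = 1 - \overline{D(0,1)} = \{z : |z - 1| \leq 1\} = \overline{D(1,1)}$.

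The argument involves no serious analytic difficulty; it is an assembly of cited results through the algebraic identity $\sigma(D) = 1 - \sigma(C_b)$. The point requiring the most care is the verification that $b$ satisfies the isometry conditions, in particular condition (ii) with its separate treatment of the case $w = o$, together with the observation that non-terminality of the root forces $b$ to be non-injective. This last fact is precisely what selects case (i) rather than case (ii) of the cited spectrum theorem, and so is the crux of obtaining the full disk $\overline{D(1,1)}$ rather than merely a subset of a shifted circle.
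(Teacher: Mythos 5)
Your proposal is correct and follows essentially the same route as the paper's own proof: verify the three conditions of \cite[Theorem 5.1]{AllenColonnaEasley:2014} to show $C_b$ is an isometry, observe that $b$ fails injectivity so it is not an automorphism, invoke case (i) of \cite[Theorem 6.2]{AllenColonnaEasley:2014} to get $\sigma(C_b) = \overline{\D}$, and translate via Theorem \ref{Theorem:Spectrum}. The only cosmetic difference is your non-injectivity argument (two children of the root), where $b(o) = o = b(u)$ for a single child $u$ of $o$ already suffices.
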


\begin{proof}
To show $\sigma(D) = \overline{D(1,1)}$, it suffices to show $\sigma(C_b) = \overline{\D}$ by Theorem \ref{Theorem:Spectrum}.  First, we prove that $C_b$ is an isometry on $\Lip$.  Observe that $b$ is a surjective self-map of $T$ fixing the root.  Let $v$ and $w$ be vertices in $T$ with $v$ and $w$ neighbors.  Without loss of generality, suppose $|v| = n$ for some $n \in \N_0$ and $w$ is a child of $v$.  If $v=o$, then $b(w) = o = b(v)$.  If $v \neq o$, $b(w)$ is a child of $b(v)$.  So $b(v)$ and $b(w)$ are neighbors. Thus by \cite[Theorem 5.1]{AllenColonnaEasley:2014} $C_b$ is an isometry on $\Lip$.  As $b$ is not injective, it is not an automorphism of $T$.  Therefore \cite[Theorem 6.2]{AllenColonnaEasley:2014} implies $\sigma(C_b) = \overline{\D}$.  Finally, by Theorem \ref{Theorem:Spectrum} we obtain $\sigma(D) = 1-\overline{\D} = \overline{D(1,1)}$, as desired.
\end{proof}

Finally, as a result of Corollary \ref{Corollary:CharacteristicFunctionDNotCompact} and the fact that the Lipschitz space contains the characteristic functions, the differentiation operator is not compact on $\LipT$ where $T$ is not a path tree.

\begin{corollary} 
If $T$ is not a path tree, then the operator $D$ is not compact on the Lipschitz space $\LipT$.
\end{corollary}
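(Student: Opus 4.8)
The plan is to reduce the statement to a single application of Corollary~\ref{Corollary:CharacteristicFunctionDNotCompact}, so that the work consists entirely of verifying that its three hypotheses hold for $\X = \LipT$. Those hypotheses are: that $T$ contains a vertex with at least two children, that the space is infinite-dimensional, and that the space contains a characteristic function $\bigchi_w$ for some child $w$ of that vertex. Since all three are either immediate or already recorded earlier in this section, the argument should be short.

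First I would translate the hypothesis ``$T$ is not a path tree'' into the geometric condition required by the corollary. By definition a path tree is one in which every vertex has exactly one child, so the negation asserts that some vertex fails to have exactly one child. Because $T$ is assumed throughout to have no terminal vertices, every vertex has at least one child; hence the only way to violate ``exactly one child'' is to produce a vertex $v$ with at least two children. This furnishes the branching vertex demanded by the first hypothesis of Corollary~\ref{Corollary:CharacteristicFunctionDNotCompact}.

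The remaining two hypotheses are already in hand from the preamble to this section. The Lipschitz space was noted to be infinite-dimensional, and the direct calculation recorded there shows that $\bigchi_w \in \LipT$ for every $w$ in $T$, with $\|\bigchi_w\|_\Lip = 1$ whenever $w \neq o$. Choosing any $w$ in $\ch(v)$, we have $w \neq o$ and $\bigchi_w \in \LipT$, so the third hypothesis is satisfied as well. Corollary~\ref{Corollary:CharacteristicFunctionDNotCompact} then applies verbatim and yields that $D$ is not compact on $\LipT$.

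There is essentially no genuine obstacle here: the substance of the result lives in Corollary~\ref{Corollary:CharacteristicFunctionDNotCompact} and the spectral machinery behind it (via Lemma~\ref{Lemma:CbNotSurjective} and Corollary~\ref{Corollary:Spectrum}), while the present corollary only specializes that machinery to $\LipT$. The single point demanding a moment's care is the logical translation of ``not a path tree'' into the existence of a vertex with two or more children, where one must explicitly invoke the standing assumption that $T$ has no terminal vertices in order to rule out a childless vertex and thereby conclude that failing ``exactly one child'' forces ``at least two children.''
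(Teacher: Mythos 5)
Your proposal is correct and follows exactly the paper's own route: the paper derives this corollary directly from Corollary~\ref{Corollary:CharacteristicFunctionDNotCompact}, citing the previously recorded facts that $\LipT$ is infinite-dimensional and contains the characteristic functions $\bigchi_w$. Your additional care in translating ``not a path tree'' into the existence of a vertex with at least two children (using the standing no-terminal-vertices assumption) is a detail the paper leaves implicit, but it is the same argument.
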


\section{Weighted Banach Spaces}\label{Section:WeightedBanachSpace}
In this section, we study the differentiation operator acting on the weighted Banach spaces $\Lmuinf$.  We utilize the results in Section \ref{Section:Operator}, demonstrating that $D$ is neither an isometry nor compact on any such weighted Banach space.

Recall the definition of the weighted Banach spaces in Definition \ref{Definition:SpecificSpaces}. It was proven in \cite{AllenCraig:2017} that each weighted Banach space is a functional Banach space under the norm \[\|f\|_\mu = \sup_{v \in T}\;\mu(v)|f(v)|\] and point-evaluation bounds \[|f(v)| \leq \frac{\|f\|_\mu}{\mu(v)}\] (see Theorem 2.1 and Proposition 2.2, respectively).  In addition, each weighted Banach space is infinite-dimensional, as they each contain a separable subspace (see \cite[Theorem 2.6]{AllenColonnaMartinez-AvendaPons:2019}).  Lastly, each weighted Banach space contains all the characteristic functions $\bigchi_w$ for $w$ in $T$ (see \cite[Lemma 2.7]{AllenPons:2018}). It is worth noting that when the weight $\mu \equiv 1$, the weighted Banach space $\Lmuinf$ is the space of bounded functions on $T$ with the supremum-norm, which has been denoted by $L^\infty$ beginning with the work in \cite{ColonnaEasley:2010}.  

To show the differentiation operator acting on $\Lmuinf$ is bounded, we can appeal to either Theorem \ref{Theorem:BoundednessCGT} by showing $D$ maps into $\Lmuinf$, or Theorem \ref{Theorem:BoundednessCphi} by showing $C_b$ is bounded on $\Lmuinf$.  As the composition operators on $\Lmuinf$ are well studied in \cite{AllenPons:2018}, we can arrive at a characterization directly by applying \cite[Theorem 3.1]{AllenPons:2018} to the backward shift map $b$.

\begin{corollary}\label{Corollary:BoundednessLmuinf} 
Suppose $\LmuinfT$ is a weighted Banach space.  Then $D$ is bounded on $\Lmuinf$ if and only if \[\sup_{v \in T} \frac{\mu(v)}{\mu(b(v))} < \infty.\]
\end{corollary}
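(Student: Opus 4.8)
The plan is to reduce the statement to the boundedness of the composition operator $C_b$ and then invoke the known characterization of bounded composition operators on the weighted Banach spaces. By Theorem \ref{Theorem:BoundednessCphi}, $D$ is bounded on $\Lmuinf$ if and only if $C_b$ is bounded on $\Lmuinf$, so it suffices to determine exactly when $C_b$ is bounded. Since $b$ is a self-map of $T$, this is precisely the situation treated in \cite[Theorem 3.1]{AllenPons:2018}, which characterizes boundedness of $C_\varphi$ on $\Lmuinf$ for an arbitrary self-map $\varphi$ of $T$; specializing that criterion to $\varphi = b$ should yield the stated condition directly.

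If one instead wants a self-contained derivation of the specialized criterion, I would argue by a two-sided estimate on $\|C_b\|$. Using the norm $\|f\|_\mu = \sup_{v \in T} \mu(v)|f(v)|$ together with the point-evaluation bound $|f(v)| \leq \|f\|_\mu/\mu(v)$ recorded for $\Lmuinf$, one has $\mu(v)|f(b(v))| \leq \frac{\mu(v)}{\mu(b(v))}\|f\|_\mu$ for every $v$ and every $f$ in $\Lmuinf$; taking the supremum over $v$ gives $\|C_b\| \leq \sup_{v \in T} \mu(v)/\mu(b(v))$, so finiteness of the supremum forces boundedness. For the reverse direction I would test $C_b$ against the normalized characteristic functions $\bigchi_w/\mu(w)$, which lie in $\Lmuinf$ and have unit norm; a short computation shows $\|C_b(\bigchi_w/\mu(w))\|_\mu = \sup_{v\,:\,b(v)=w} \mu(v)/\mu(w)$, and letting $w$ range over all vertices (so that $v$ ranges over $T$ via $w = b(v)$) yields $\|C_b\| \geq \sup_{v \in T} \mu(v)/\mu(b(v))$. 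Hence $C_b$ is bounded exactly when this supremum is finite, and applying Theorem \ref{Theorem:BoundednessCphi} transfers the conclusion to $D$.

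I do not expect a genuine obstacle here, since the result is essentially a direct specialization; the only points requiring care are confirming that the hypotheses of \cite[Theorem 3.1]{AllenPons:2018} are met by the self-map $b$ and that the boundedness condition in that theorem, once specialized to $\varphi = b$, reads literally $\sup_{v \in T} \mu(v)/\mu(b(v)) < \infty$ rather than an equivalent reformulation. The self-contained estimate above removes even that dependence and, as a bonus, identifies $\|C_b\|$ with the supremum itself, which will be convenient when establishing the operator norm of $D$ later in this section.
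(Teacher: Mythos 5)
Your proposal is correct and takes essentially the same route as the paper: the corollary is obtained by combining Theorem \ref{Theorem:BoundednessCphi} with the specialization of \cite[Theorem 3.1]{AllenPons:2018} to $\varphi = b$, which is precisely how the paper derives it (the paper gives no further proof). Your supplementary self-contained estimate is also sound --- the upper bound via the point-evaluation inequality and the lower bound via the unit-norm test functions $\bigchi_w/\mu(w)$ together recover $\|C_b\| = \sup_{v \in T} \mu(v)/\mu(b(v))$, a norm formula the paper simply cites from \cite[Theorem 3.1]{AllenPons:2018} and then uses for the operator norm of $D$.
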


\noindent Since $\|C_b\| = \sup_{v \in T}\frac{\mu(v)}{\mu(b(v))}$ by \cite[Theorem 3.1]{AllenPons:2018}, Corollary \ref{Corollary:OperatorNormBounds} results in the following bounds on the norm of $D$, namely \[1-\sup_{v \in T}\frac{\mu(v)}{\mu(b(v))} \leq \|D\| \leq 1 + \sup_{v \in T} \frac{\mu(v)}{\mu(b(v))}.\] Utilizing Theorem \ref{Theorem:BoundednessCGT} to prove boundedness also develops sharper bounds for the operator norm of $D$, resulting in an exact form. 

First note that Corollary \ref{Corollary:BoundednessLmuinf} is written as a direct application of \cite[Theorem 3.1]{AllenPons:2018} to the composition operator $C_b$ acting on $\Lmuinf$; the characterization of the boundedness of $D$ is written in terms of a supremum taken over all of $T$.  When considering the fact that $b(o) = o$, we see that $\frac{\mu(o)}{\mu(b(o))} = 1$.  Thus, \[\sup_{v \in T} \frac{\mu(v)}{\mu(b(v))} < \infty\] if and only if \[\sup_{v \in T^*}\frac{\mu(v)}{\mu(b(v))} < \infty.\] Although the boundedness of $D$ can be described in terms of $T$ or $T^*$, it is more natural to state $\|D\|$ in terms of $T^*$ since $f'(o) = 0$ for any function $f$ on a tree.

\begin{theorem}\label{Theorem:BoundednessLmuinf} 
Suppose $\LmuinfT$ is a weighted Banach space.  Then $D$ is bounded on $\Lmuinf$ if and only if \[\sup_{v \in T^*} \frac{\mu(v)}{\mu(b(v))} < \infty.\] Moreover, \[\|D\| = 1+\sup_{v \in T^*}\frac{\mu(v)}{\mu(b(v))}.\]
\end{theorem}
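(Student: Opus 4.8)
The plan is to split the statement into its two assertions---the boundedness criterion and the exact operator norm---and to dispatch the first quickly before concentrating on the norm computation. For the boundedness equivalence I would invoke Corollary \ref{Corollary:BoundednessLmuinf}, which characterizes boundedness of $D$ via finiteness of $\sup_{v\in T}\frac{\mu(v)}{\mu(b(v))}$, and then use the observation recorded just above the statement: since $b(o)=o$ forces $\frac{\mu(o)}{\mu(b(o))}=1$, the supremum over $T$ is finite if and only if the supremum over $T^*$ is finite. This immediately yields the stated criterion. For the rest I would set $\beta = \sup_{v\in T^*}\frac{\mu(v)}{\mu(b(v))}$ and prove the two inequalities $\|D\|\le 1+\beta$ and $\|D\|\ge 1+\beta$ separately.

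For the upper bound, given any $f\in\Lmuinf$ and any $v\in T^*$, I would estimate $\mu(v)|f'(v)| = \mu(v)|f(v)-f(b(v))|$ by the triangle inequality as $\mu(v)|f(v)| + \frac{\mu(v)}{\mu(b(v))}\,\mu(b(v))|f(b(v))|$, where each factor of the form $\mu(\cdot)|f(\cdot)|$ is bounded by $\|f\|_\mu$. This gives $\mu(v)|f'(v)|\le (1+\beta)\|f\|_\mu$. Since $(Df)(o)=0$, taking the supremum over $v\in T^*$ produces $\|Df\|_\mu\le(1+\beta)\|f\|_\mu$, whence $\|D\|\le 1+\beta$.

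The substantive part is the lower bound, which I would obtain by exhibiting near-extremal test functions. For a fixed $v\in T^*$ I would take $f_v = \frac{1}{\mu(v)}\bigchi_v - \frac{1}{\mu(b(v))}\bigchi_{b(v)}$, which lies in $\Lmuinf$ because the space contains all characteristic functions. As $f_v$ is supported on $\{v,b(v)\}$ with $\mu(v)|f_v(v)| = \mu(b(v))|f_v(b(v))| = 1$, we have $\|f_v\|_\mu = 1$. Evaluating the derivative at $v$ gives $f_v'(v) = f_v(v) - f_v(b(v)) = \frac{1}{\mu(v)} + \frac{1}{\mu(b(v))}$, so $\mu(v)|f_v'(v)| = 1 + \frac{\mu(v)}{\mu(b(v))}$, and hence $\|Df_v\|_\mu \ge 1 + \frac{\mu(v)}{\mu(b(v))}$. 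Taking the supremum over $v\in T^*$ then yields $\|D\|\ge 1+\beta$, and combining with the upper bound gives the claimed equality.

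The main obstacle---though a mild one---is confirming that the test functions are genuinely admissible and that the tree structure does not inflate $\|f_v\|_\mu$ beyond $1$: I must check that $f_v$ vanishes off $\{v,b(v)\}$ so the supremum defining $\|f_v\|_\mu$ is attained and equals $1$, and I must handle the case $b(v)=o$, where $b(b(v))=o$ as well, to confirm the computation of $f_v'(v)$ is unaffected (it is, since only $f_v(v)$ and $f_v(b(v))$ enter). Because the lower bound requires only evaluating $\|Df_v\|_\mu$ at the single vertex $v$, I need not analyze the derivative at the children of $v$ or of $b(v)$; passing to the supremum over the family $\{f_v\}_{v\in T^*}$ then delivers the sharp constant $1+\beta$ directly, with no $\varepsilon$-approximation required.
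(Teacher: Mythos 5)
Your proposal is correct, and the upper-bound half coincides with the paper's argument (the same triangle-inequality estimate giving $\|Df\|_\mu \le (1+\beta)\|f\|_\mu$). Where you genuinely diverge is in the lower bound and in the boundedness equivalence. The paper proves both at once with a single global test function, $g(v) = \frac{(-1)^{|v|}}{\mu(v)}$, whose alternating sign makes $\mu(v)|g'(v)| = 1 + \frac{\mu(v)}{\mu(b(v))}$ hold at \emph{every} $v \in T^*$ simultaneously; then $\|g\|_\mu = 1$ and $\|Dg\|_\mu = 1+\beta$ give the sharp lower bound, and the same computation shows that boundedness of $D$ forces $\beta < \infty$, so no appeal to Corollary \ref{Corollary:BoundednessLmuinf} is needed. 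You instead use a family of two-point test functions $f_v = \frac{1}{\mu(v)}\bigchi_v - \frac{1}{\mu(b(v))}\bigchi_{b(v)}$, each witnessing the quantity $1+\frac{\mu(v)}{\mu(b(v))}$ at the single vertex $v$, and take a supremum over the family; you then outsource the boundedness criterion to Corollary \ref{Corollary:BoundednessLmuinf} together with the $T$-versus-$T^*$ observation, all of which the paper has already established, so this is legitimate. Your verifications are the right ones and they check out: $f_v$ is supported on the two distinct vertices $v \ne b(v)$, so $\|f_v\|_\mu = 1$ exactly, the opposing signs make the two contributions to $f_v'(v)$ add rather than cancel, and the case $b(v) = o$ causes no trouble since only $f_v(v)$ and $f_v(b(v))$ enter. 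What each approach buys: your local construction is more robust---it only requires the space to contain characteristic functions, a hypothesis the paper has verified for $\Lmuinf$ and which transfers to other discrete function spaces with the same feature---while the paper's single function $g$ is self-contained (no citation needed for the converse direction) and exhibits one norming vector with $\|Dg\|_\mu = 1+\beta$ exactly, showing the operator norm is attained rather than merely approached over a family.
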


\begin{proof}
First, suppose $\sup_{v \in T^*}\frac{\mu(v)}{\mu(b(v))} < \infty$.  Let $f$ in $\Lmuinf$ with $\|f\|_\mu \leq 1$. Observe
\[\begin{aligned}
\|Df\|_\mu &= \sup_{v \in T}\;\mu(v)|f'(v)|\\
&= \sup_{v \in T^*}\;\mu(v)|f'(v)|\\
&= \sup_{v \in T^*}\;\mu(v)|f(v)-f(b(v))|\\
&\leq \sup_{v \in T^*}\;\mu(v)\left(|f(v)| + |f(b(v))|\right)\\
&\leq \sup_{v \in T^*}\;\mu(v)|f(v)| + \sup_{v \in T^*}\;\mu(v)|f(b(v))|\\
&\leq \|f\|_\mu + \sup_{v \in T^*} \frac{\mu(v)}{\mu(b(v))}\|f\|_\mu\\
&= \left(1+\sup_{v \in T^*}\frac{\mu(v)}{\mu(b(v))}\right)\|f\|_\mu\\
&\leq 1 + \sup_{v \in T^*}\frac{\mu(v)}{\mu(b(v))}.
\end{aligned}\]
So $Df$ is an element of $\Lmuinf$.  Thus Theorem \ref{Theorem:BoundednessCGT} implies $D$ is bounded on $\Lmuinf$.  In addition, we have established the upper bound $\|D\| \leq 1+\sup_{v \in T^*}\frac{\mu(v)}{\mu(b(v))}$.

Finally, suppose $D$ is bounded on $\Lmuinf$. Define the function $g(v) = \frac{(-1)^{|v|}}{\mu(v)}$.  By direct calculation, $\|g\|_\mu = 1$.  Observe
\[\begin{aligned}
1 + \sup_{v \in T^*} \frac{\mu(v)}{\mu(b(v))}
&= \sup_{v \in T^*}\;\left(1 + \frac{\mu(v)}{\mu(b(v))}\right)\\
&= \sup_{v \in T^*}\;\mu(v)\left(\frac{1}{\mu(v)} + \frac{1}{\mu(b(v))}\right)\\
&= \sup_{v \in T^*}\;\mu(v)\left|\frac{(-1)^{|v|}}{\mu(v)} - \frac{(-1)^{|v|-1}}{\mu(b(v))}\right|\\
&= \sup_{v \in T^*}\;\mu(v)\left|\frac{(-1)^{|v|}}{\mu(v)} - \frac{(-1)^{|b(v)|}}{\mu(b(v))}\right|\\
&= \sup_{v \in T^*}\;\mu(v)|g'(v)|\\
&= \sup_{v \in T}\;\mu(v)|g'(v)|\\
&= \|Dg\|_\mu\\
&\leq \|D\|.
\end{aligned}\]
Thus $\sup_{v \in T^*}\frac{\mu(v)}{\mu(b(v))}$ is finite and the lower bound of the operator norm is established, completing the proof.  
\end{proof}

Since the weight function $\mu$ is positive, $\frac{\mu(v)}{\mu(b(v))} > 0$ for all $v \in T^*$.  So $\|C_b\|$ is strictly positive, making $\|D\| > 1$.  Next, we show that for every value in $(1,\infty)$ there is an $\Lmuinf$ on which $D$ is bounded and $\|D\|$ attains the value.

\begin{theorem} 
For $M > 1$, there exists a weighted Banach space $\LmuinfT$ for which $D$ is bounded on $\Lmuinf$ and $\|D\| = M$.
\end{theorem}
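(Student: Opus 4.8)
The plan is to invoke the exact norm formula from Theorem \ref{Theorem:BoundednessLmuinf}, namely $\|D\| = 1 + \sup_{v \in T^*}\frac{\mu(v)}{\mu(b(v))}$, and then engineer a weight whose consecutive ratios are all equal to the prescribed value $M - 1$. Since $M > 1$, the constant $c = M - 1$ is strictly positive, so I would first fix any tree $T$ without terminal vertices rooted at $o$ (a homogeneous tree of degree at least $2$, say, which guarantees no terminal vertices so that $\LmuinfT$ is defined in the sense fixed at the start of Section \ref{Section:Operator}), and then define the weight $\mu : T \to \R$ by $\mu(v) = (M-1)^{|v|}$. This is a legitimate weight because $M - 1 > 0$ forces $\mu(v) > 0$ for every vertex $v$.

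Next I would verify the ratio. For any $v \in T^*$ we have $|b(v)| = |v^-| = |v| - 1$, so
\[
\frac{\mu(v)}{\mu(b(v))} = \frac{(M-1)^{|v|}}{(M-1)^{|v|-1}} = M - 1.
\]
Hence $\sup_{v \in T^*}\frac{\mu(v)}{\mu(b(v))} = M - 1 < \infty$, and Theorem \ref{Theorem:BoundednessLmuinf} immediately yields both that $D$ is bounded on $\Lmuinf$ and that $\|D\| = 1 + (M-1) = M$.

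There is essentially no obstacle here beyond bookkeeping: the real content lies in the observation that Theorem \ref{Theorem:BoundednessLmuinf} expresses $\|D\|$ as one plus a supremum of positive ratios, so realizing an arbitrary target value in $(1,\infty)$ reduces to making those ratios constant, which the geometric weight $\mu(v) = (M-1)^{|v|}$ accomplishes exactly. The only points requiring a moment's care are confirming that the ambient tree genuinely has no terminal vertices and that the weight is positive; both are handled by the explicit choices above, and the argument is otherwise independent of the particular tree selected.
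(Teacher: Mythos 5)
Your proposal is correct and matches the paper's proof essentially verbatim: the paper also takes the geometric weight $\mu(v) = (M-1)^{|v|}$, computes the constant ratio $\frac{\mu(v)}{\mu(b(v))} = M-1$ on $T^*$, and invokes the norm formula of Theorem \ref{Theorem:BoundednessLmuinf} to conclude $\|D\| = 1 + (M-1) = M$. Your additional remarks on positivity of the weight and the absence of terminal vertices are sound housekeeping but do not change the argument.
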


\begin{proof}
Let $M > 1$ and define $\mu_{\text{\tiny$M$}}(v) = (M-1)^{|v|}$ for all $v \in T$.  Then a direct calculation shows $D$ is bounded on $L_{\hspace{-.2ex}\mu_{\text{\tiny$M$}}}^{\infty}$ since $\frac{\mu_{\text{\tiny$M$}}(v)}{\mu_{\text{\tiny$M$}}(b(v))} = M-1$ for all $v$ in $T^*$.  It then immediately follows that \[\|D\| = 1 + \sup_{v \in T^*} \frac{\mu_{\text{\tiny$M$}}(v)}{\mu_{\text{\tiny$M$}}(b(v))} = M,\] as desired.
\end{proof}

It is, in fact, not the case that $D$ is bounded on every weight Banach space, as the following example demonstrates.

\begin{example}
Define the weight $\mu$ on tree $T$ by \[\mu(v) = \begin{cases}|v| & \text{if $|v|$ is odd}\\\hfil 1 & \text{otherwise.}\end{cases}\] Let $(v_n)$ be a sequence in $T^*$ with $|v_n| = n$ for all $n$ in $\N$ and $b(v_n) = v_{n-1}$ for all $n \geq 2$.  Note such a sequence must exist since $T$ is an infinite tree with no terminal vertices.  By considering the subsequence $w_n = v_{2n+1}$, we see that 
\[\frac{\mu(w_n)}{\mu(b(w_n))} = \frac{\mu(v_{2n+1})}{\mu(v_{2n})} = \frac{|v_{2n+1}|}{1} = 2n+1 \to \infty\] as $n \to \infty$. It then immediately follows that $D$ is not bounded on $\Lmuinf$ by Theorem \ref{Theorem:BoundednessLmuinf}.
\end{example}

Having an exact expression of the operator norm for $D$ allows for the immediate determination that $D$ is not an isometry on $\Lmuinf$.  Theorem \ref{Theorem:IsometriesWithConstants} only applies to the weighted Banach spaces that contain the constant functions.  These are precisely the spaces induced by a bounded weight function $\mu$ \cite[Theorem 2.3]{AllenPons:2018}.  However, it does not apply to those whose weight function is unbounded on $T$, for example if $\mu(v) = |v|+1$.  However, since $\|D\| > 1$ on every weighted Banach space, it follows directly that $D$ is not an isometry. 

\begin{corollary} 
The operator $D$ is not an isometry on any weighted Banach space $\LmuinfT$.
\end{corollary}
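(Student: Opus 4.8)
The plan is to exploit the fact that any isometry must have operator norm exactly $1$, and then to rule this out using the exact norm formula already obtained in Theorem~\ref{Theorem:BoundednessLmuinf}. Recall that if $A$ is an isometry on a nontrivial normed space, then $\|Af\| = \|f\|$ for every $f$, and consequently $A$ is bounded with $\|A\| = 1$; this is immediate from the definition. I would therefore split the argument according to whether or not $D$ is bounded on the given space $\Lmuinf$.

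First, suppose $D$ is bounded on $\Lmuinf$. By Theorem~\ref{Theorem:BoundednessLmuinf}, the operator norm is given exactly by $\|D\| = 1 + \sup_{v \in T^*}\frac{\mu(v)}{\mu(b(v))}$. Since the weight $\mu$ is positive by hypothesis, each quotient $\frac{\mu(v)}{\mu(b(v))}$ is strictly positive, and hence the supremum is strictly positive as well. It follows that $\|D\| > 1$. Because an isometry would force $\|D\| = 1$, this strict inequality shows that $D$ cannot be an isometry in this case.

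Second, if $D$ is not bounded on $\Lmuinf$, then $D$ cannot be an isometry either, since every isometry is in particular a bounded operator (of norm $1$). Taking the two cases together exhausts all weighted Banach spaces and yields the claim.

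I do not anticipate any genuine obstacle here, as the substantive work was already carried out in deriving the exact norm expression in Theorem~\ref{Theorem:BoundednessLmuinf}. The only point deserving mild care is to observe that the positivity of $\mu$ makes the supremum strictly positive, so that one obtains $\|D\| > 1$ rather than merely $\|D\| \geq 1$; it is precisely this strictness that rules out the isometry possibility.
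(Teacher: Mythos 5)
Your proof is correct and follows essentially the same route as the paper: the authors likewise observe that positivity of $\mu$ forces $\sup_{v \in T^*}\frac{\mu(v)}{\mu(b(v))} > 0$, so the exact norm formula of Theorem~\ref{Theorem:BoundednessLmuinf} gives $\|D\| > 1$, which is incompatible with being an isometry. Your explicit treatment of the case where $D$ is unbounded (an isometry is automatically bounded with norm $1$) is a small tidying touch the paper leaves implicit, but it does not change the argument.
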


Having information about the operator norms of $D$ and $C_b$ acting on weighted Banach spaces inform about the spectrum.  Combining results from the current section and Section \ref{Section:Operator}, we obtain the following.

\begin{corollary} 
Suppose $\LmuinfT$ is a weighted Banach space upon which $D$ is bounded.  Then
\begin{enumerate}
\item $\sigma_p(D) = \begin{cases}\{0\} & \text{if $\mu$ is bounded on $T$}\\\hfil\emptyset & \text{otherwise.}\end{cases}$
\item $\sigma(D)$ contains 1 and is a closed subset of $\left\{\lambda : |\lambda - 1| \leq \sup_{v \in T^*} \frac{\mu(v)}{\mu(b(v))}\right\}$.
\end{enumerate}
\end{corollary}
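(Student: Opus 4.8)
The plan is to obtain both parts by specializing the general spectral results of Section~\ref{Section:Operator} to $\Lmuinf$, using two facts recorded earlier in this section: that $\Lmuinf$ contains the constant functions exactly when $\mu$ is bounded on $T$ (\cite[Theorem 2.3]{AllenPons:2018}), and that $\Lmuinf$ contains every characteristic function $\bigchi_w$ (\cite[Lemma 2.7]{AllenPons:2018}). Throughout I will also invoke the value $\|C_b\| = \sup_{v \in T}\frac{\mu(v)}{\mu(b(v))}$ supplied by \cite[Theorem 3.1]{AllenPons:2018}.

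Part (i) I would deduce directly from Theorem~\ref{Theorem:Eigenvalues}. Since $D$ is assumed bounded on $\Lmuinf$, that theorem applies and gives $\sigma_p(D) = \{0\}$ when $\Lmuinf$ contains the constant functions and $\sigma_p(D) = \emptyset$ otherwise. Substituting the criterion ``$\mu$ bounded on $T$'' for ``contains the constant functions'' yields the stated case split with no further work.

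For part (ii) I would treat the two claims separately. To show $1 \in \sigma(D)$, Theorem~\ref{Theorem:Spectrum} reduces the problem to verifying that $0 \in \sigma(C_b)$, i.e.\ that $C_b$ is not invertible. The operator $C_b$ is injective by the proposition above asserting injectivity of $C_b$, so it suffices to show it is not surjective, and for this I would apply Lemma~\ref{Lemma:CbNotSurjective}: because $T$ has no terminal vertices the root $o$ has at least two children $u$ and $w$, and the function $\bigchi_w \in \Lmuinf$ satisfies $\bigchi_w(w) = 1 \neq 0 = \bigchi_w(u)$, so $\bigchi_w$ is not constant on $\ch(o)$ and $C_b$ cannot be surjective. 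Hence $0 \in \sigma(C_b)$ and $1 = 1-0 \in \sigma(D)$. For the containment, Corollary~\ref{Corollary:SpectrumBound} already asserts that $\sigma(D)$ is a closed subset of $\{\lambda : |\lambda - 1| \leq \|C_b\|\}$, so it remains only to rewrite $\|C_b\|$ as the displayed supremum.

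The one step that calls for genuine care is precisely this rewriting of the radius. Corollary~\ref{Corollary:SpectrumBound} delivers the disk of radius $\|C_b\| = \sup_{v \in T}\frac{\mu(v)}{\mu(b(v))}$, whereas the statement records the supremum over $T^*$; since the root contributes the ratio $\frac{\mu(o)}{\mu(b(o))} = 1$, these two suprema coincide exactly when $\sup_{v \in T^*}\frac{\mu(v)}{\mu(b(v))} \geq 1$. In the complementary regime the root term forces $\|C_b\| = 1$ strictly larger than $\sup_{v \in T^*}\frac{\mu(v)}{\mu(b(v))}$, so Corollary~\ref{Corollary:SpectrumBound} alone gives only the larger disk and one must separately justify that $\sigma(D)$ still lies in the smaller set. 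I expect reconciling these two suprema, and confirming that the fixed root ratio does not enlarge the effective bounding disk beyond the claimed radius, to be the main (and essentially the only) subtlety; the remainder is a direct appeal to the spectral machinery of Section~\ref{Section:Operator}.
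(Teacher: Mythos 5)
Your assembly of the corollary follows exactly the route the paper intends (the paper offers no written proof, presenting the statement as an immediate combination of Theorem~\ref{Theorem:Eigenvalues} with \cite[Theorem 2.3]{AllenPons:2018} for part (i), and of Lemma~\ref{Lemma:CbNotSurjective}, Theorem~\ref{Theorem:Spectrum}, and Corollary~\ref{Corollary:SpectrumBound} with $\|C_b\| = \sup_{v \in T}\frac{\mu(v)}{\mu(b(v))}$ for part (ii)). Your supporting detail that the root has at least two children is correct: since $T$ has no terminal vertices every vertex has at least two neighbors, and every neighbor of $o$ is a child of $o$, so $\bigchi_w \in \Lmuinf$ with $w \in \ch(o)$ witnesses non-surjectivity of $C_b$, giving $0 \in \sigma(C_b)$ and hence $1 \in \sigma(D)$.

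However, the one step you deferred --- ``confirming that the fixed root ratio does not enlarge the effective bounding disk beyond the claimed radius'' --- cannot be carried out, and you should not expect it to be: in the regime you isolated, the containment as printed is actually false. Suppose $s := \sup_{v \in T^*}\frac{\mu(v)}{\mu(b(v))} < 1$. Then $\mu(v) \leq \mu(o)\,s^{|v|}$ for all $v$, so $\mu$ is bounded on $T$; hence the constant functions belong to $\Lmuinf$, and part (i) gives $0 \in \sigma_p(D) \subseteq \sigma(D)$. But $|0 - 1| = 1 > s$, so $\sigma(D)$ is not contained in the disk of radius $s$ about $1$. A concrete instance is $\mu(v) = 2^{-|v|}$, where $s = \tfrac{1}{2}$ yet $0 \in \sigma(D)$. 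Thus the corollary's part (ii) is correct only when $s \geq 1$, in which case $\sup_{v \in T^*} = \sup_{v \in T} = \|C_b\|$ and Corollary~\ref{Corollary:SpectrumBound} yields it directly; in general the radius must be $\|C_b\| = \max\left\{1,\, s\right\}$, the supremum over all of $T$. The paper's one-line derivation silently identifies the two suprema (its earlier discussion notes only that their \emph{finiteness} is equivalent, not their values), so what you flagged is a genuine slip in the statement itself rather than a removable subtlety; the correct completion of your proposal is to replace the claimed radius by $\|C_b\|$ (or to add the hypothesis $s \geq 1$), not to attempt a proof of the smaller containment.
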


Finally, we see the differentiation operator is not compact on any weighted Banach space $\LmuinfT$, where $T$ is not a path, by Corollary \ref{Corollary:CharacteristicFunctionDNotCompact}, since $\Lmuinf$ contains the characteristic function $\bigchi_v$ for all $v$ in $T$.

\begin{corollary} 
If $T$ is not a path tree, then operator $D$ is not compact on weighted Banach space $\LmuinfT$.
\end{corollary}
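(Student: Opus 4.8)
The plan is to deduce the statement directly from Corollary \ref{Corollary:CharacteristicFunctionDNotCompact}, which already encapsulates the full spectral obstruction to compactness in terms of a single characteristic function. That corollary requires three things: that $T$ contain a vertex $v$ with at least two children, that $\X$ be an infinite-dimensional discrete function space on $T$, and that $\X$ contain $\bigchi_w$ for some $w$ in $\ch(v)$. For $\X = \Lmuinf$ the last two are recorded at the start of this section: every weighted Banach space is infinite-dimensional by \cite[Theorem 2.6]{AllenColonnaMartinez-AvendaPons:2019}, and every weighted Banach space contains all the characteristic functions $\bigchi_w$, $w \in T$, by \cite[Lemma 2.7]{AllenPons:2018}. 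Consequently the only item needing verification is the existence of a vertex with two or more children.

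First I would extract such a vertex from the hypothesis that $T$ is not a path tree. By definition a path tree is one in which every vertex has exactly one child, so $T$ failing to be a path tree means some vertex $v$ does not have exactly one child; a priori $v$ has either zero children or at least two. The standing assumption that $T$ has no terminal vertices eliminates the first alternative: a non-root vertex with no children would have its ancestor as its unique neighbor and hence be terminal, while the root has only children among its neighbors and thus at least two of them. Therefore $v$ must have at least two children, which is exactly the combinatorial hypothesis of Corollary \ref{Corollary:CharacteristicFunctionDNotCompact}. Fixing any $w$ in $\ch(v)$, the space $\Lmuinf$ is then an infinite-dimensional discrete function space on $T$ containing $\bigchi_w$, so the corollary applies and yields that $D$ is not compact on $\LmuinfT$.

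I expect essentially no obstacle here, since all of the genuine analytic content — the injectivity of $C_b$, its failure to be surjective whenever some function is not constant on children, the resulting membership of $1$ in $\sigma(D)$ as a non-eigenvalue, and the contradiction with the Spectral Theorem for Compact Operators — has already been absorbed into Corollary \ref{Corollary:CharacteristicFunctionDNotCompact}. The single point warranting care is the combinatorial reduction of the previous paragraph, where the absence of terminal vertices is precisely what upgrades ``not exactly one child'' to ``at least two children''; once that is in place the result is immediate.
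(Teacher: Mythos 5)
Your proposal is correct and follows exactly the paper's route: the paper likewise deduces this corollary immediately from Corollary \ref{Corollary:CharacteristicFunctionDNotCompact}, citing that $\Lmuinf$ is infinite-dimensional and contains all characteristic functions $\bigchi_w$. Your careful verification that ``not a path tree'' plus the absence of terminal vertices forces some vertex to have at least two children is a point the paper leaves implicit, but it is the same argument, just spelled out more fully.
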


\section{Hardy Spaces on Homogeneous Trees}\label{Section:HardySpaces}
In this section, we study the differentiation operator acting on the (discrete) Hardy spaces $\HardypT$ for $1 \leq p < \infty$, defined in \cite{MuthukumarPonnusamy:2017:I} on a $(q+1)$-homogeneous tree $T$ by Muthukumar and Ponnusamy.  We utilize the results in Section \ref{Section:Operator}, determining that $D$ is bounded on every Hardy space but not compact or an isometry on any.

In this section, we will always assume the tree $T$ is $(q+1)$-homogeneous without specifying, as the results apply for any $q$ in $\N$.  Also, we will only consider the case when $1 \leq p < \infty$.  The Hardy space $\T_\infty(T)$ is the space $L^\infty$, with the same norm.  So, the differentiation operator on $\T_\infty(T)$ is completely characterized in Section \ref{Section:WeightedBanachSpace} in the case that $T$ is a $(q+1)$-homogeneous tree and $\mu\equiv 1$.

Recall the definition of the Hardy spaces in Definition \ref{Definition:SpecificSpaces}. It was proven in \cite{MuthukumarPonnusamy:2017:I} that each such Hardy space is a functional Banach space under the norm \[\|f\|_p = \sup_{n \in \N_0}\;M_p(n,f)\] and point-evaluation bounds \[|f(v)| \leq \left((q+1)q^{|v|-1}\right)^{1/p}\|f\|_p\] (see Theorem 3.1 and Lemma 3.12, respectively).  In addition, each Hardy space is infinite-dimensional, as each space contains a separable subspace (see Theorem 3.10).  Lastly, each Hardy space contains all the characteristic functions $\bigchi_w$ for $w$ in $T$ (see proof of Theorem 3.10). 

To determine if the differentiation operator is bounded on any Hardy space, we will once again look to the composition operator $C_b$ acting on $\Hardyp$.  This operator is well studied in \cite{MuthukumarPonnusamy:2017:II} and \cite{MuthukumarPonnusamy:2020}.  For the particular operator $C_b$, there are several results that lead to determining boundedness.  We will utilize the one that also determines the operator norm, so that we can establish norm estimates for $D$ simultaneously.  

\begin{theorem}\label{Theorem:BoundednessHardyp}
For $1 \leq p < \infty$, the operator $D$ is bounded on the Hardy space $\HardypT$.
\end{theorem}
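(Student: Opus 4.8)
The plan is to invoke Theorem \ref{Theorem:BoundednessCphi}, reducing the boundedness of $D$ on $\HardypT$ to the boundedness of the composition operator $C_b$ on $\HardypT$. The backward shift $b$ is a specific self-map of the $(q+1)$-homogeneous tree $T$, so I would apply the known characterization of bounded composition operators on the Hardy space from \cite{MuthukumarPonnusamy:2017:II} or \cite{MuthukumarPonnusamy:2020} directly to the map $\varphi = b$. Since the section explicitly states that the authors will use ``the one that also determines the operator norm,'' I expect the proof to cite a theorem that gives both a boundedness criterion and an explicit value of $\|C_b\|$, then verify the criterion holds for $b$.

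First I would recall the relevant combinatorial quantity from the cited composition-operator theorem. For a self-map $\varphi$ of $T$, boundedness of $C_\varphi$ on $\HardypT$ is typically controlled by a supremum over vertices of a ratio involving the ``preimage counting'' weights $(q+1)q^{|v|-1}$ that appear in the definition of $M_p(n,f)$. Concretely, the relevant count is how the measure of the fiber $\varphi^{-1}(v)$ compares to the measure at $v$. For $\varphi = b$, the fiber $b^{-1}(v)$ is exactly $\ch(v)$ together with (when $v=o$) the root itself, and $|\ch(v)| = q+1$ if $v = o$ and $|\ch(v)| = q$ otherwise. Because the vertices in $b^{-1}(v)$ sit one level deeper than $v$, the level-$n$ normalizing factor changes from $(q+1)q^{n-1}$ to $(q+1)q^{n}$, a clean factor of $q$. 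This regularity is what makes $b$ well-behaved: the relevant supremum is finite (indeed it should evaluate to a simple closed expression in $q$), so the boundedness criterion is satisfied and $C_b$ is bounded on every $\HardypT$.

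After verifying the criterion, I would conclude by Theorem \ref{Theorem:BoundednessCphi} that $D = I - C_b$ is bounded on $\HardypT$, and simultaneously record the value of $\|C_b\|$ furnished by the cited theorem so that the subsequent norm computation in Theorem \ref{Theorem:NormHardyp} (and the verification of Conjecture \ref{Conjecture:OperatorNorm}) can proceed from Corollary \ref{Corollary:OperatorNormBounds}.

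\textbf{The main obstacle} I anticipate is correctly transcribing the boundedness/norm quantity from the external composition-operator theorem and specializing it to $b$ without error: the precise form of the supremum depends on conventions for the fiber measures and on the homogeneity parameter $q$, and one must handle the root $o$ (where $b(o)=o$, so $o \in b^{-1}(o)$ creates an extra sheet) as a separate case to avoid miscounting the fiber over $o$. Provided the cited theorem's hypotheses are matched exactly to $\varphi = b$, the computation of the governing supremum should collapse to a constant depending only on $q$, yielding finiteness and hence boundedness for all $1 \le p < \infty$.
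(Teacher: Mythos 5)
Your proposal follows the paper's proof exactly: it reduces boundedness of $D$ to boundedness of $C_b$ via Theorem \ref{Theorem:BoundednessCphi}, then verifies the boundedness-and-norm criterion of \cite[Theorem 4]{MuthukumarPonnusamy:2020} for $\varphi = b$ by counting fibers ($b^{-1}(o) = \{o\} \cup \ch(o)$, $b^{-1}(w) = \ch(w)$ otherwise), including the same special handling of the root. The only detail left implicit in your sketch is that the governing supremum $\alpha$ evaluates to exactly $1$ (the factor $q$ from $|\ch(w)|$ cancels against the ratio $c_{n-1}/c_n$), not merely a $q$-dependent constant, which is what gives $\|C_b\| = \alpha^{1/p} = 1$ for the later norm computation.
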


\begin{proof}
For $n$ in $\N$ and $w$ in $T$, define the set $P_{w,n}$ to be the number of vertices of length $n$ in $b^{-1}(w)$; that is $P_{w,n} = \left|b^{-1}(w) \cap \{|v|=n\}\right|$.  First note $b^{-1}(o) = \{o\} \cup \ch(o) = \overline{B(o,1)}$.  So $P_{o,0} = 1, P_{o,1} = q+1,$ and $P_{o,n} = 0$ for all $n \geq 2$.  Also, if $w \in T^*$ then $b^{-1}(w) = \ch(w)$.  So $P_{w,n} \neq 0$ only when $n = |w|+1$.  

For $m$ in $\N_0$, the constant $N_{m,n}$ is defined as $N_{m,n} = \max_{\{|w| = m\}} P_{w,n}$.  By the above discussion, $N_{m,n} \neq 0$ only when either $m=n=0$ or $n=m+1$.  So, we have $N_{0,0} = 1$ and \[N_{m,m+1} = \begin{cases}q+1 & \text{if $m=0$}\\q & \text{otherwise.}\end{cases}\] 

For each $n$ in $\N_0$, define the quantity $\alpha_n$ by \[\alpha_n = \frac{1}{c_n}\sum_{m=0}^\infty N_{m,n}c_m,\] where \[c_n = \begin{cases}1 & \text{if $n=0$}\\(q+1)q^{n-1} & \text{if $n \in \N$.}\end{cases}\]
By \cite[Theorem 4]{MuthukumarPonnusamy:2020} the composition operator $C_b$ is bounded on $\HardypT$ if and only if \[\alpha = \sup_{n \in \N_0}\;\alpha_n < \infty.\]
Observe $\alpha = 1$, and thus $C_b$ is bounded on $\Hardyp$, since for $n > 1$
\[\begin{aligned}
\alpha_0 &= \frac{1}{c_0}\sum_{m=0}^\infty N_{m,0}c_m = \frac{1}{c_0}\left(N_{0,0}c_0\right) = 1,\\
\alpha_1 &= \frac{1}{c_1}\sum_{m=0}^\infty N_{m,1}c_m = \frac{1}{c_1}\left(N_{0,1}c_0\right) = 1,\\
\alpha_n &= \frac{1}{c_n}\sum_{m=0}^\infty N_{m,n}c_m = \frac{1}{c_n}\left(N_{n-1,n}c_{n-1}\right) = \frac{qc_{n-1}}{c_n} = 1.
\end{aligned}\] 
Thus $D$ is bounded on $\HardypT$ by Theorem \ref{Theorem:BoundednessCphi}.
\end{proof}

\noindent Furhtermore, \cite[Theorem 4]{MuthukumarPonnusamy:2020} shows that $\|C_b\| = \alpha^{1/p} = 1$ for every space $\Hardyp$.  Thus, by Corollary \ref{Corollary:OperatorNormBounds}, we have \begin{equation}\label{Inequality:OperatorBoundsHardyp}0 \leq \|D\| \leq 2\end{equation} for the differentiation operator acting on $\Hardyp$.  If we can show that $\|D\| = 2$, then we again add evidence to support Conjecture \ref{Conjecture:OperatorNorm}.  

\begin{theorem}\label{Theorem:NormHardyp} For $1 \leq p < \infty$, the operator $D$ on the Hardy space $\HardypT$ has norm 2.
\end{theorem}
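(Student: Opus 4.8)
The plan is to leverage the bounds already recorded in \eqref{Inequality:OperatorBoundsHardyp}, namely $0 \leq \|D\| \leq 2$, so that it suffices to produce a test function witnessing $\|D\| \geq 2$. Mirroring the strategy used for the Lipschitz space in Theorem \ref{Theorem:NormLipschitz} and for the weighted Banach spaces in Theorem \ref{Theorem:BoundednessLmuinf}, I would exhibit a single function $f$ in $\HardypT$ of unit norm whose derivative realizes the norm exactly, rather than settling for an approximating sequence. The natural candidate is the alternating radial function $f(v) = (-1)^{|v|}$, the Hardy-space analogue of the function $(-1)^{|v|}/\mu(v)$ employed in the weighted Banach setting.

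The key structural fact driving every computation is that on a $(q+1)$-homogeneous tree there are exactly $(q+1)q^{n-1}$ vertices of length $n$ for each $n$ in $\N$. Consequently, for any function depending only on $|v|$, the averaging defining $M_p(n,\cdot)$ collapses: the normalizing weight $\frac{1}{(q+1)q^{n-1}}$ cancels against the count of level-$n$ vertices. First I would use this observation to verify that $M_p(0,f) = |f(o)| = 1$ and $M_p(n,f) = 1$ for every $n$ in $\N$, whence $\|f\|_p = 1$; in particular $f$ indeed lies in $\HardypT$.

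Next I would compute the derivative directly. For $v \neq o$ we have $|b(v)| = |v|-1$, so $Df(v) = (-1)^{|v|} - (-1)^{|v|-1} = 2(-1)^{|v|}$, while $Df(o) = 0$. Since $Df$ is again radial, the same cancellation yields $M_p(n,Df) = 2$ for every $n$ in $\N$ (and $M_p(0,Df) = 0$), so that $\|Df\|_p = 2$. Because $\|f\|_p = 1$, this gives $\|D\| \geq \|Df\|_p = 2$, and combined with \eqref{Inequality:OperatorBoundsHardyp} forces $\|D\| = 2$.

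I do not anticipate a serious obstacle: the alternating sign is precisely what converts the subtraction in $Df(v) = f(v) - f(b(v))$ into an \emph{addition} of magnitudes, doubling the value at each level, and the homogeneity of the tree guarantees that this doubling survives the $p$-mean intact and uniformly across all levels. The only point requiring minor care is confirming that the level-$n$ vertex count is exactly $(q+1)q^{n-1}$ and recognizing that this is what renders both $M_p$ computations independent of $n$; once that is in hand the argument is immediate and, pleasingly, delivers the exact operator norm together with a concrete extremal function.
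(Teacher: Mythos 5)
Your proof is correct and takes essentially the same approach as the paper: both combine the upper bound \eqref{Inequality:OperatorBoundsHardyp} with a single unit-norm extremal function whose derivative has norm exactly $2$. The only difference is the witness --- the paper uses the function equal to $-1$ at $o$, $1$ on level one, and $0$ elsewhere, whereas you use the alternating radial function $(-1)^{|v|}$; both computations rest on the same level count $(q+1)q^{n-1}$, and your derivative calculation $Df(v) = 2(-1)^{|v|}$ for $v \in T^*$ is valid, so the argument goes through as stated.
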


\begin{proof}
Since $\|D\| \leq 2$ by inequality \eqref{Inequality:OperatorBoundsHardyp}, to establish the lower bound $\|D\| \geq 2$ it suffices to construct a function $f$ in $\Hardyp$ with $\|f\|_p \leq 1$ for which $\|Df\|_p = 2$.  

Define the radially constant function $f$ on $T$ by 
\[f(v) = \begin{cases}
-1 & \text{if $v=o$}\\
1 & \text{if $|v|=1$}\\
0 & \text{otherwise.}
\end{cases}\] 
We see that $\|f\|_p = 1$ from the following calculation, with $m \geq 2$,
\[\begin{aligned}
M_p(0,f) &= |f(o)| = 1\\
M^p_p(1,f) &= \frac{1}{q+1}\sum_{|v|=1}|f(v)|^p = \frac{1}{q+1}(q+1)(1)^p = 1\\
M^p_p(m,f) &= \frac{1}{(q+1)q^{m-1}}\sum_{|v|=m}|f(v)|^p = 0.
\end{aligned}\] 

\noindent Note that \[(Df)(v) = \begin{cases}2 & \text{if $|v|=1$}\\
-1 & \text{if $|v|=2$}\\ 0 & \text{otherwise.}\end{cases}\]  From the following calculations, we see $\|Df\|_p = 2$.  Observe, with $m \geq 3$,
\[\begin{aligned}
M_p(0,f') &= |f'(o)| = 0\\
M^p_p(1,f') &= \frac{1}{q+1}\sum_{|v|=1}|f'(v)|^p = \frac{1}{q+1}(q+1)2^p = 2^p\\
M^p_p(2,f') &= \frac{1}{(q+1)q}\sum_{|v|=2}|f'(v)|^p = \frac{1}{(q+1)q}(q+1)q(1^p) = 1\\
M^p_p(m,f') &= \frac{1}{(q+1)q^{m-1}}\sum_{|v|=m}|f'(v)|^p = 0.
\end{aligned}\]
Thus $\|D\| = 2$, as desired.
\end{proof}

From a direct calculation, we see that if $f$ is a constant function, then $\|f\|_p = |f(o)|$ since for each $n$ in $\N$ we have \[M_p^p(n,f) = \frac{1}{(q+1)q^{n-1}}\sum_{|v|=n} |f(v)|^p = \frac{1}{(q+1)q^{n-1}}\left((q+1)q^{n-1}|f(o)|^p\right) = |f(o)|^p.\]  Thus, by Theorem \ref{Theorem:IsometriesWithConstants}, the differentiation operator acting on any Hardy space is not an isometry.

\begin{corollary} 
For $1 \leq p < \infty$, the operator $D$ is not an isometry on the Hardy space $\HardypT$.
\end{corollary}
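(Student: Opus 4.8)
The plan is to reduce the statement to Theorem \ref{Theorem:IsometriesWithConstants}, which asserts that $D$ fails to be an isometry on any discrete function space that contains the constant functions. Consequently, it suffices to verify that each Hardy space $\HardypT$ contains the constant functions, after which the conclusion is immediate.

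To carry this out, I would take an arbitrary constant function $f \equiv c$ on $T$ and compute its Hardy norm directly. For $n$ in $\N$ the quantity $M_p^p(n,f)$ reduces to $\frac{1}{(q+1)q^{n-1}}\sum_{|v|=n}|c|^p$; since a $(q+1)$-homogeneous tree has exactly $(q+1)q^{n-1}$ vertices of length $n$, this equals $|c|^p$, and $M_p(0,f)=|c|$ as well. Hence $\|f\|_p = \sup_{n \in \N_0} M_p(n,f) = |c| < \infty$, so every constant function lies in $\HardypT$. This is precisely the vertex-counting calculation recorded in the paragraph immediately preceding the statement, so no new work is needed.

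With the containment of constants established, applying Theorem \ref{Theorem:IsometriesWithConstants} with $\X = \HardypT$ shows at once that $D$ is not an isometry, for every $1 \le p < \infty$ and every $(q+1)$-homogeneous tree $T$. Concretely, the nonzero constant functions lie in $\ker(D)$, so $D$ is not injective and therefore cannot be an isometry.

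I do not expect a genuine obstacle here: the only content is the finiteness of the norm of a constant function, which follows from the vertex-counting identity above. As an alternative route, one could bypass Theorem \ref{Theorem:IsometriesWithConstants} entirely and argue from the operator norm, since any isometry necessarily has operator norm $1$, whereas Theorem \ref{Theorem:NormHardyp} gives $\|D\| = 2$ on every $\HardypT$. Either way the conclusion is forced, so I would present the shortest version, namely the direct appeal to Theorem \ref{Theorem:IsometriesWithConstants}.
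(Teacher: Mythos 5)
Your proposal is correct and matches the paper's argument exactly: the paper likewise verifies via the vertex-counting identity $M_p^p(n,f)=|f(o)|^p$ that constant functions lie in $\HardypT$ with $\|f\|_p=|f(o)|$, and then applies Theorem \ref{Theorem:IsometriesWithConstants}. Your alternative remark that $\|D\|=2\neq 1$ also works, but the main route you chose is precisely the paper's proof.
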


Since $\HardypT$ contains the constant functions for each $1 \leq p < \infty$ and $q$ in $\N$, the point spectrum of $D$ is precisely $\{0\}$.  As $\HardypT$ also contains the characteristic functions, $C_b$ is not surjective on $\Hardyp$, and thus $1$ is an element of $\sigma(D)$.  Applying the results of Corollaries \ref{Corollary:SpectrumBound} and \ref{Corollary:Spectrum}, we obtain the following result.

\begin{corollary}\label{Corollary:SpectrumHardyp} 
Suppose $\HardypT$ is a Hardy space, for $1 \leq p < \infty$ .  Then
\begin{enumerate}
\item $\sigma_p = \{0\}$
\item $\sigma(D)$ contains 1 and is a closed subset of $\{\lambda:|\lambda-1|\leq 1\}.$
\end{enumerate}
\end{corollary}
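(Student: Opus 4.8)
The plan is to assemble the general spectral results of Section~\ref{Section:Operator} with the structural facts about $\HardypT$ recorded at the start of this section and the norm computation in Theorem~\ref{Theorem:BoundednessHardyp}. Since $D$ is bounded on $\HardypT$ by Theorem~\ref{Theorem:BoundednessHardyp}, the hypotheses of the relevant corollaries are all satisfied, so each part of the statement reduces to citing an earlier result against a concrete feature of the space.

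For part~(i), I would first observe that $\HardypT$ contains the constant functions: the display immediately preceding the statement shows $\|f\|_p = |f(o)|$ whenever $f$ is constant, so the constants lie in $\HardypT$. Theorem~\ref{Theorem:Eigenvalues} then applies in its first case and yields $\sigma_p(D) = \{0\}$ directly. Equivalently, Corollary~\ref{Corollary:Spectrum}(i) confines the point spectrum to $\{0\}$, while any nonzero constant function realizes $0$ as an eigenvalue.

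For part~(ii), the containment $1 \in \sigma(D)$ reduces via Theorem~\ref{Theorem:Spectrum} to showing $0 \in \sigma(C_b)$, that is, that $C_b$ fails to be invertible on $\HardypT$. By the injectivity of $C_b$ established earlier in Section~\ref{Section:Operator}, it suffices to show $C_b$ is not surjective. Here I would invoke Lemma~\ref{Lemma:CbNotSurjective}: were $C_b$ surjective, every function in $\HardypT$ would be constant on children. But $T$ is $(q+1)$-homogeneous with $q \in \N$, so the root $o$ has $q+1 \geq 2$ children; choosing a child $w$ of $o$, the characteristic function $\bigchi_w$ belongs to $\HardypT$ and is not constant on $\ch(o)$. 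This contradiction shows $C_b$ is not surjective, hence $0 \in \sigma(C_b)$ and $1 = 1 - 0 \in \sigma(D)$ by Theorem~\ref{Theorem:Spectrum}.

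Finally, the bounding set follows from Corollary~\ref{Corollary:SpectrumBound}: since Theorem~\ref{Theorem:BoundednessHardyp}, via \cite[Theorem 4]{MuthukumarPonnusamy:2020}, gives $\|C_b\| = 1$, the spectrum $\sigma(D)$ is a closed subset of $\{\lambda : |\lambda| \leq \|D\|\} \cap \{\lambda : |\lambda - 1| \leq 1\}$, which is contained in $\{\lambda : |\lambda - 1| \leq 1\}$. I expect no genuine obstacle here; the only point requiring any care is the non-surjectivity of $C_b$, and that is handled cleanly by exhibiting a single characteristic function that is not constant on children, made possible by the branching of the homogeneous tree.
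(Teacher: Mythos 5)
Your proposal is correct and takes essentially the same route as the paper: constants in $\HardypT$ yield $\sigma_p(D)=\{0\}$ via Theorem \ref{Theorem:Eigenvalues}, the characteristic functions together with Lemma \ref{Lemma:CbNotSurjective} show $C_b$ is not surjective so that $1\in\sigma(D)$ by Theorem \ref{Theorem:Spectrum}, and Corollary \ref{Corollary:SpectrumBound} with $\|C_b\|=1$ gives the bounding disk. The only difference is cosmetic: you spell out the non-surjectivity step (exhibiting $\bigchi_w$ for a child $w$ of the root, which exists since $q+1\geq 2$) that the paper leaves implicit in its prefatory remarks.
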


Finally, since each Hardy space contains the characteristic functions, the differentiation operator acting on $\Hardyp$ cannot be compact by Corollary \ref{Corollary:CharacteristicFunctionDNotCompact}.

\begin{corollary} 
For $1 \leq p < \infty$, the operator $D$ is not compact on $\HardypT$.
\end{corollary}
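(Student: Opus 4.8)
The plan is to obtain this as an immediate consequence of Corollary \ref{Corollary:CharacteristicFunctionDNotCompact}, so the entire task reduces to verifying that space's three hypotheses for $\HardypT$. No new construction is needed; everything required has already been recorded.

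First I would confirm that $T$ contains a vertex with at least two children. Since $T$ is $(q+1)$-homogeneous with $q$ in $\N$, every vertex has $q+1 \geq 2$ neighbors. The root $o$ has no ancestor, so all $q+1$ of its neighbors lie in $\ch(o)$; hence $o$ itself is a vertex with at least two children. (This is exactly the earlier observation that a homogeneous tree of degree at least two cannot be a path tree.) Next I would recall the two structural facts about $\HardypT$ collected at the start of this section: each Hardy space is infinite-dimensional, and each contains all of the characteristic functions $\bigchi_w$ for $w$ in $T$. In particular, for any $w$ in $\ch(o)$ the function $\bigchi_w$ belongs to $\HardypT$.

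With a branching vertex $o$, a child $w$ of $o$ satisfying $\bigchi_w \in \HardypT$, and the infinite-dimensionality of the space all in hand, Corollary \ref{Corollary:CharacteristicFunctionDNotCompact} applies verbatim and gives that $D$ is not compact on $\HardypT$ for every $1 \leq p < \infty$. I do not anticipate any genuine obstacle: the single point meriting a moment's care is noting that homogeneity of degree at least two forces the existence of a branching vertex, and the root furnishes one automatically, so the hypotheses are met uniformly in $p$ and $q$.
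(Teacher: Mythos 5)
Your proposal is correct and follows the paper's own argument exactly: the paper also deduces non-compactness directly from Corollary \ref{Corollary:CharacteristicFunctionDNotCompact}, using that each $\HardypT$ contains the characteristic functions (and, implicitly, that a $(q+1)$-homogeneous tree with $q \geq 1$ is not a path tree). Your explicit check that the root supplies a vertex with at least two children is a nice touch but adds nothing beyond what the paper already notes at the end of Section \ref{Section:Operator}.
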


\section{Open Questions}\label{Section:OpenQuestions}
We conclude with open questions which were inspired while developing this manuscript.  Our hope is that these questions will initiate further research in this area of operator theory.

\begin{enumerate}
\item[1.] Is there a discrete function space on which $D$ is bounded, but its norm is not $1 + \|C_b\|$?
\item[2.] Is there a non-trivial discrete function space on which $D$ acts isometrically?
\item[3.] Is there a non-trivial discrete function space on which $C_b$ is surjective?
\item[4.] Is there a non-trivial discrete function space on which $D$ is compact?
\item[5.] What is the spectrum of $D$ or $C_b$ acting on either $\Lmuinf$ or $\HardypT$?
\item[6.] Is $D$ compact on $\LipT$ or $\LmuinfT$ for path tree $T$?
\end{enumerate}

\section*{Acknowledgements}
The work of the second author was conducted while an undergraduate student at the University of Wisconsin-La Crosse and funded by the Department of Mathematics \& Statistics Undergraduate Research Endowment.

\bibliographystyle{amsplain}
\bibliography{references.bib}
\end{document}